\documentclass[onecolumn,reqno]{amsart}
\usepackage{amsthm,amsmath,amssymb,amsfonts}
\usepackage{graphicx,color}
\usepackage{appendix}

\textwidth 14.8cm \textheight 19.5cm \topmargin 0in
\oddsidemargin 0.5in
\evensidemargin 0.5in
\parskip 1mm

\newtheorem{theorem}{Theorem}[section]
\newtheorem{lemma}[theorem]{Lemma}
\newtheorem{corollary}[theorem]{Corollary}
\newtheorem{proposition}[theorem]{Proposition}

\theoremstyle{definition}
\newtheorem{definition}[theorem]{Definition}
\newtheorem{example}[theorem]{Example}
\newtheorem{remark}[theorem]{Remark}
\newtheorem{problem}[theorem]{Problem}
\date{\today}

\def \RR {\mathbb R}

\def \beq {\begin {equation}}
\def \eeq {\end{equation}}

\def \W {\widetilde}

\begin{document}	

		\title[Bounds on Discrete Potentials of Spherical $(k,k)$-Designs]{Bounds on Discrete Potentials of Spherical $(k,k)$-Designs}

\author[S. Borodachov]{S. V. Borodachov}
\address{Department of Mathematics, Towson University, 7800 York Rd, Towson, MD, 21252, USA}
\email{sborodachov@towson.edu}

\author[P. Boyvalenkov]{P. G. Boyvalenkov}
\address{ Institute of Mathematics and Informatics, Bulgarian Academy of Sciences,
8 G Bonchev Str., 1113  Sofia, Bulgaria}
\email{peter@math.bas.bg}

\author[P. Dragnev]{P. D. Dragnev}
\address{ Department of Mathematical Sciences, Purdue University \\
Fort Wayne, IN 46805, USA }
\email{dragnevp@pfw.edu}

\author[D. Hardin]{D. P. Hardin}
\address{ Center for Constructive Approximation, Department of Mathematics \\
Vanderbilt University, Nashville, TN 37240, USA }
\email{doug.hardin@vanderbilt.edu}

\author[E. Saff]{E. B. Saff}
\address{ Center for Constructive Approximation, Department of Mathematics \\
Vanderbilt University, Nashville, TN 37240, USA }
\email{edward.b.saff@vanderbilt.edu}

\author[M. Stoyanova]{M. M. Stoyanova} 
\address{ Faculty of Mathematics and Informatics, Sofia University ``St. Kliment Ohridski"\\
5 James Bourchier Blvd., 1164 Sofia, Bulgaria}
\email{stoyanova@fmi.uni-sofia.bg}

\begin{abstract}  
We derive universal lower and upper bounds for max-min and min-max  problems (also known as polarization) for the potential of spherical $(k,k)$-designs
and provide certain examples, including unit-norm tight frames, that attain these bounds. The universality is understood in the sense that the bounds hold for all spherical $(k,k)$-designs and for a large class of potential functions, and the bounds involve certain nodes and weights that are independent of the potential. When the potential function is $h(t)=t^{2k}$, we prove an optimality 
property of the spherical $(k,k)$-designs in the class of all spherical codes of the same cardinality both for max-min and min-max polarization problems.
\end{abstract}

	\maketitle


\section{Introduction}\label{Intro}

Let $\mathbb{S}^{n-1}$ denote the unit sphere in Euclidean space $\mathbb{R}^n$ with the normalized unit surface measure $\sigma_{n}$. We will call a {\em spherical code} a finite nonempty set $C=\{x_1,x_2,\ldots,x_{N}\} \subset   \mathbb{S}^{n-1}$. 
Consider any function $h:[-1,1] \to (-\infty,+\infty]$, finite and continuous on $(-1,1)$ and continuous in the extended sense at $t=1$ and $t=-1$; that is, $\lim\limits_{t\to 1^-}h(t)=h(1)$ and $\lim\limits_{t\to -1^+}h(t)=h(-1)$, where $h(1)$ and $h(-1)$ can assume the value $+\infty$. We define the {\em discrete $h$-potential} associated with $C$ by
\begin{equation}\label{Eq1} U^h(x,C):= \sum_{i=1}^N h(x \cdot x_i), 
\end{equation}
where $x \in \mathbb{S}^{n-1}$ is arbitrary and $x \cdot y$ denotes the Euclidean inner product in $\mathbb{R}^n$. Here we agree that for every $t \in \RR\cup\{+\infty\}$, $t+\infty=\infty$. We allow an infinite value for $h(1)$, since some widely used potentials such as the Coulomb potential, the Riesz potential, and the logarithmic potential assume the value $+\infty$ in \eqref{Eq1} when $x=x_i$ for some $i$. We will also consider even symmetrizations of functions $h$ defining these potentials. The corresponding potentials will assume the value $+\infty$ also when $x=-x_i$ for some $i$. Let 
\begin{equation} \label{max-min}
m^h(C):=\inf \{ U^h(x,C) :\, x \in \mathbb{S}^{n-1}\}, \ \ 
m_{N}^h := \sup  \{ m^h(C) :\, |C|=N, \ C \subset \mathbb{S}^{n-1} \}
\end{equation}
be the {\em max-min polarization quantities} associated with a given code
and cardinality $N$, respectively. Similarly, we define the {\em min-max polarization quantities}
\begin{equation} \label{min-max}
M^h(C):=\sup \{ U^h(x,C) :\, x \in \mathbb{S}^{n-1} \}, \ \ 
M_{N}^h =\inf \{ M^h(C) :\, |C|=N, \ C \subset \mathbb{S}^{n-1} \}.
\end{equation}
To avoid the trivial case of $M^h(C)=+\infty$ for every code $C\neq \emptyset$, we will additionally suppose that $h$ is continuous and finite on $[-1,1]$ when considering quantities \eqref{min-max}. Notable examples here are the Gaussian potential and the $p$-frame potential. 

The problem of finding quantities $m^h_N$ and $M^h_N$ and spherical codes $C$ attaining the supremum in \eqref{max-min} (resp., infimum in \eqref{min-max}) is known as the max-min (resp., min-max) polarization problem. It can be traced back to the problem of finding the Chebyshev polynomial \cite{Che1859}; that is, the monic polynomial of a given degree $N$ with the least uniform norm on $[-1,1]$. Taking the function $-\ln\left|t\right|$ of the polynomial, one obtains the max-min polarization for the logarithmic potential, see also \cite[Chapter VI]{Sze1975}. For this reason, quantity $m^h_N$ is called the Chebyshev constant (of the sphere $\mathbb S^{n-1}$). Early results on polarization were established by Ohtsuka \cite{Oht1967} and Stolarsky \cite{Sto1975circle} in 1960-70s. Later, this problem regained interest of mathematicians starting from the works \cite{Amb2009,AmbBalErd2013,NR1}. A more extensive review can be found in \cite[Chapter 14]{BHS}. The max-min polarization problem can be interpreted as the problem of positioning $N$ sources of light on a sphere so that the darkest spot on the sphere receives the largest possible amount of light. Closely related interpretation of the max-min polarization problem is finding the minimal number of injectors of a substance and their positions in a given area of a body so that every point in that area receives at least a prescribed amount of the substance. Of course, similar applications can be found for the min-max polarization as well. If $N$-point configurations are restricted to a given compact set $A$ and the minimum of their potential is taken over a compact set $D$, where $A\neq D$, we obtain a two-plate polarization problem.

Relevant for coding theory is the optimal covering problem (see, e.g., \cite{BorFPC} for a review). It requires finding an $N$-point spherical code with the smallest covering radius. The covering radius of a given spherical code is the minimal common radius of equal closed balls centered at points of the code whose union covers the sphere or the distance to a point on the sphere ``most remote" from a closest (to it) point of the code. The optimal covering problem arises as the limiting case as $s\to \infty$ of the max-min polarization problem for the Riesz potential $h(x\cdot y)=(2-2x\cdot y)^{-s/2}=\left|x-y\right|^{-s}$, $s>0$, $x,y\in S^{n-1}$. Exact solutions for max-min polarization on the sphere are known for arbitrary $N$ on the unit circle $S^1$ and for $N$ up to $n+1$ on $S^{n-1}$, see \cite{HarKenSaf2013,B2} and references therein. Exact solutions for min-max polarization on the sphere are known when a tight spherical design exists for that dimension $n$ and cardinality $N$ and for $n=4$ and $N=120$, see \cite{B,BDHSS-22a} and references therein. Our goal in this paper is providing efficient lower and upper bounds for max-min and min-max polarization quantities on the sphere for general codes and for $(k,k)$-designs.

\subsection{Definition of $(k,k)$-designs} We shall need the family of Gegenbauer (orthogonal) polynomials
$\{P_\ell^{(n)}(t)\}_{\ell=0}^\infty$. By definition, ${\rm deg}\ \!P_\ell^{(n)}=\ell$, $\ell\geq 0$, and $P_\ell^{(n)}$ are pairwise orthogonal on $[-1,1]$ 
with respect to the measure $\mu_{n}$ given by
\begin{equation}\label{mu_n}
d\mu_{n}(t):=\gamma_{n}(1-t^2)^{(n-3)/2}\, dt,
\end{equation}
where the constant $\gamma_{n}$ is chosen such that $\mu_{n}$ is a probability measure on $[-1,1]$. The normalization $P_\ell^{(n)}(1)=1$
uniquely determines the sequence $\{P_\ell^{(n)}(t)\}_{\ell=0}^\infty$.
The connection between measures $\mu_n$ and $\sigma_n$ is given by the basic form of the Funk-Hecke formula (see, e.g., \cite[Eq. (5.1.9)]{BHS}): for every bounded and measurable function $f:[-1,1]\to\RR$ and any $z\in \mathbb S^{n-1}$, 
\begin{equation}\label{FunkHecke}
\int\limits_{\mathbb S^{n-1}}f(z\cdot x)\ \! d\sigma_n(x)=\gamma_n\int\limits_{-1}^{1}f(t)(1-t^2)^{(n-3)/2}\ \! dt.
\end{equation}

Given a code $C=\{x_1,x_2,\ldots,x_{N}\} \subset \mathbb{S}^{n-1}$ we define the 
{\em moments of $C$} as
\begin{equation}\label{moments} \mathcal{M}_\ell(C):=\sum_{i,j=1}^N P_\ell^{(n)}(x_i \cdot x_j), \ \ \ell \in \mathbb{N}.
\end{equation}
As is well known, the Gegenbauer polynomials are positive definite (see e.g. \cite{Sch42}, \cite[Chapter 5, Definition 5.2.10]{BHS}). This means that the moments are
nonnegative; i.e., $\mathcal{M}_\ell(C) \geq 0$ for every positive integer $\ell$. We remark that the inequality $\mathcal M_2(C)\geq 0$ is equivalent to the inequality 
$$
\sum\limits_{i,j=1}^{N}(x_i\cdot x_j)^2\geq \frac {N^2}{n}
$$ 
which is a special case of the Welch bound \cite{Wel2006} on cross-correlation of signals.

If the equality $\mathcal{M}_\ell(C) = 0$ holds for every $\ell=1,\ldots,\tau$, then $C$ is a spherical $\tau$-design \cite{DGS}. 

In this paper, we utilize the concept of {\em spherical $(k,k)$-designs} (cf. \cite{DS89,Lev98A,KP10,KP11,BZ4,Ban17,HW,W2018}) which will play an essential role in finding lower and upper bounds for the polarization quantities \eqref{max-min} and \eqref{min-max} via linear programming. 

\begin{definition} \label{def-k-k} Let $k$ be a positive integer.
A spherical code $C \subset \mathbb{S}^{n-1}$ is called a {\em spherical $(k,k)$-design} if 
$$\mathcal{M}_\ell(C)=0$$
for each even $\ell=2,\ldots,2k$. 
\end{definition}

It is immediate from this definition and \eqref{moments} that $C$ is a spherical $(k,k)$-design if and only if $-C$ is a spherical $(k,k)$-design. The most straightforward examples of $(k,k)$-designs come from spherical designs \cite{DGS}. Indeed, every spherical $(2k+1-\varepsilon)$-design, $\varepsilon \in \{0,1\}$, is a spherical $(k,k)$-design.

\subsection{Known work on $(k,k)$-designs} The notion of $(k,k)$-designs seems to be first considered 
by Delsarte and Seidel in 1989 \cite{DS89}, who studied (weighted) designs of indices\footnote{A spherical code $C \subset \mathbb{S}^{n-1}$ is said to have an index $\ell$ if $\mathcal{M}_\ell(C)=0$; cf. \cite{DS89,AY,BDK,Ban17}.}
from a set $A+B$, where $A$ and $B$ are finite sets of non-negative integers
(see Section 5 in \cite{DS89} for the specific choices of $A$ and $B$ that lead to $(k,k)$-designs). Moreover, in \cite{DS89} a lower bound on the minimum possible cardinality of $(k,k)$-designs was obtained and 
a strong relation to antipodal sets (designs in the real projective spaces)
was revealed. If one has an antipodal spherical $(2k+1)$-design; that is, a $k$-design in the real projective space (see, e.g. \cite{Hoggar1982,Lev92,W2018} and \cite{BZ4,Ban09,Ban17}), and
selects a point from each pair of antipodal points in it, then one obtains a $(k,k)$-design. Conversely, if a $(k,k)$-design does not possess a pair of antipodal points, then it is a half-design of an antipodal $(2k+1)$-design. 
However, there are $(k,k)$-designs that are not antipodal themselves and cannot be extended to antipodal $(2k+1)$-designs by adding the antipodes of all points that originally do not have an antipode in the original design.

In 2010, Kotelina and Pevnyi \cite{KP10} (see also \cite{KP11,KP16}) considered $(k,k)$-designs 
(called semi-designs in \cite{KP10,KP16} and half-designs in \cite{KP11}) and elaborated on some of their extremal properties. 
Bannai et. al. \cite{BZ4} (see also \cite{Ban09,Ban17}) introduced and 
 investigated ``a half of an antipodal design" (as noted above, it is a $(k,k)$-design). Namely, given an antipodal $(2k+1)$-design of cardinality $2N$, they investigated when it is possible to select one point from each of the $N$ pairs of antipodal points so that the resulting code has its center of mass at the origin (which makes it a $1$-design and hence, a $2$-design). Hughes-Waldron \cite{HW} (see also \cite{W2018}) considered spherical ``half-designs" as sequences of points (not necessarily distinct) with even moments $\mathcal{M}_{2i}(C)$, $i=1,2,\dots,k$, equal to zero, which agrees with Definition \ref{def-k-k}. Recently, Elzenaar and Waldron \cite{ElzWal2025} presented numerical constructions of a number of new $(k,k)$-designs that have minimal cardinality.

\begin{remark}
If we allow repetition of points in $C$ 
(this can be viewed as a weighted spherical code), then the 
union $C \cup (-C)$ 
is an antipodal weighted spherical $(2k+1)$-design if and only if $C$ is a weighted $(k,k)$-design (see \cite{DS89,W2018}). 
\end{remark}

Another significant source of examples and probably the most important interpretation of $(k,k)$-designs comes from
the theory of tight frames (see \cite{BF03,W2018}). 
The unit-norm tight frames (considered as sets; i.e., repetition of points is not allowed) are, in fact, spherical $(1,1)$-designs (see, for example,  \cite{W2018} and references therein). Since each $(k,k)$-design
is an $(\ell,\ell)$-design for every positive integer $\ell \leq k$, the 
investigation of $(k,k)$-designs provides a point of view on unit-norm tight frames as nested classes of $(\ell,\ell)$-designs for $\ell \geq 1$.

Our goal in the present paper is to obtain estimates for the following  max-min and min-max polarization quantities
for $(k,k)$-designs:
\begin{equation}\label{PolMinDes}
m^h_{N}(k):=\sup \{ m^h(C)\, : \, |C|=N, \,C \ {\rm is \ a }\ (k,k){\mbox -}{\rm design \ on \ } \mathbb{S}^{n-1}\}
\end{equation}
and
\begin{equation}\label{PolMaxDes}
M^h_{N}(k):=\inf \{ M^h(C)\, : \, |C|=N, \,C \ {\rm is \ a }\ (k,k){\mbox -}{\rm design \ on \ } \mathbb{S}^{n-1} \}.
\end{equation}
For this purpose it is natural to utilize linear programming techniques, since in particular, for any $(k,k)$-design $C \subset \mathbb{S}^{n-1}$ and any even polynomial $f$ of degree at most $2k$, the discrete potential $U^f(x,C)$ is constant on $\mathbb{S}^{n-1}$ (see Lemma \ref{k-k-lemma}). Note that from the definitions \eqref{max-min}-\eqref{min-max} and \eqref{PolMinDes}-\eqref{PolMaxDes} we conclude that whenever a $(k,k)$-design on $\mathbb{S}^{n-1}$ of cardinality $N$ exists,  $m_N^h\geq m_N^h(k)$ and $M_N^h\leq M_N^h(k)$, respectively. A more extensive review of research related to max-min and min-max polarization is given in Section \ref{main}.

\subsection{Properties of $(k,k)$-designs}\label{Pkkd} For any positive integer $\ell$ and $z\in \mathbb{S}^{n-1}$, we denote (see, e.g., Lemma 2.1 in \cite{NS88})
\begin{equation} \label{cell}
c_\ell:= \int_{\mathbb{S}^{n-1}} (z\cdot y)^\ell \, d \sigma_{n} (y)=
\left\{
\begin{array}{ll}
0& \ell\geq 1\ \ \mbox{is odd,} \\
  \frac{1\cdot 3\cdot \dots \cdot(\ell-1)}{n\cdot (n+2)\cdot \dots \cdot (n+\ell-2)}, & \ell\geq 2\ \ \mbox{is even}.
\end{array}
  \right.
\end{equation}
We also set $c_0:=\int_{\mathbb S^{n-1}}1 \ \! d\sigma_n(y)=1$. Note that $c_\ell=\int_{-1}^1 t^\ell d \mu_n(t)$, $\ell\geq 0$.

A spherical code $C=\{x_1,x_2,\ldots,x_{N}\} \subset \mathbb{S}^{n-1}$ 
is said to be a {\em spherical design of order $\ell$}, $\ell\in \mathbb{N}$, if 
the following {\em Waring-type identity} holds for any $x\in \mathbb{R}^n$ (cf. \cite[Eq. (6.8)]{W2018})
\begin{equation}\label{Waring}
Q_\ell (x):=\sum_{i=1}^N (x\cdot x_i)^{\ell}-c_{\ell} \|x\|^{\ell} N
\equiv 0.
\end{equation}
Applying the Laplace operator $\triangle=\partial^2/\partial t_1^2 +\dots+\partial^2/\partial t_{n}^2 $ one obtains  
$$ \triangle Q_\ell(x)=\ell(\ell-1)Q_{\ell-2}(x), $$
which implies that spherical designs of order $\ell$ are also spherical 
designs of order $(\ell-2j)$ for $j=1,\dots,\lfloor (\ell-1)/2 \rfloor$. This observation leads to
an equivalent definition for spherical $(k,k)$-designs. Indeed, applying \eqref{Waring} to $x \in \mathbb{S}^{n-1}$, we see that 
the identity
\begin{equation}\label{Waring-on-sphere}
\sum_{i=1}^N (x\cdot x_i)^{\ell}=c_{\ell}N 
\end{equation}
holds for any spherical design of order $\ell$. In particular, if $C \subset \mathbb{S}^{n-1}$ is a spherical $(k,k)$-design, it holds for $\ell=2,4,\ldots,2k$. 
Observe also that if \eqref{Waring} holds for two consecutive positive integers, say $\tau$ and $\tau-1$, then the code $C$ is a spherical $\tau$-design. This discussion is closely related to \cite[Section 3]{GS81}.

Next we summarize some equivalent definitions for spherical $(k,k)$-designs, which we will use later in the paper.

Recall that a real-valued function on $\mathbb{S}^{n-1}$ is called a {\em spherical harmonic of degree $\ell$} if it is the restriction of a homogeneous polynomial $Y$ in $n$ variables of degree $\ell$ that is harmonic, i.e. for which $\triangle Y \equiv 0$. Denote by $\Pi_m$ the space of real univariate polynomials of degree at most $m$, by $\mathbb{P}_\ell^n$ the space of homogeneous polynomials of degree $\ell$ in $n$ variables, and by $\mathbb{H}_\ell^n$ the subspace consisting of spherical harmonics of degree $\ell$.  

\begin{lemma}\label{k-k-lemma} \rm{(see, e.g. \cite[Chapter 6]{W2018}, \cite[Lemma 5.2.2]{BHS}, \cite{BOT15})} 
Let $n\geq 2$, $C=\{x_1,\dots,x_{N}\}\subset \mathbb{S}^{n-1}$ be a 
spherical code, and $k \in \mathbb{N}$. Then the following are equivalent.
\begin{itemize}
\item[i)] $C$ is a spherical $(k,k)$-design;
\vskip 1mm
\item[ii)] $C$ is a spherical design of order $2k$;
\vskip 1mm
\item[iii)] the identity \eqref{Waring} holds for $\ell=2k$ and any $x\in \mathbb{S}^{n-1}$;
\vskip 1mm
\item[iv)] the moments $\mathcal{M}_{2i}(C)=0$ for $i \in \{1,\ldots,k\}$;
\vskip 1mm
\item[v)] the identity 
\begin{equation} \label{k-k-constant}
U^f(x,C)=\sum_{y \in C} f(x \cdot y) = f_0|C|=|C| \int_{-1}^1 f(t) d\mu_n(t)
\end{equation} 
holds for every (even) polynomial $f(t)=\sum_{i=0}^{k} f_{2i}P_{2i}^{(n)}(t) \in \Pi_{2k}$;
\vskip 1mm
\item[vi)] for any $Y \in \cup_{i=1}^k \mathbb{H}_{2i}^n$,
$$ \sum_{j=1}^N Y(x_j)=0; $$
\item[vii)] $-C$ is a spherical $(k,k)$-design. 
\end{itemize}
\end{lemma}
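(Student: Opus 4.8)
\emph{Proof plan.} I would route all seven conditions through condition (iv) (which coincides with the definition (i)) using two standard ingredients. The first is the addition (reproducing–kernel) formula for Gegenbauer polynomials (see, e.g., \cite[Chapter 5]{BHS}): for $x,y\in\mathbb S^{n-1}$,
\[
\sum_{m=1}^{r_{\ell}} Y_{\ell,m}(x)\,Y_{\ell,m}(y)=r_\ell\, P_\ell^{(n)}(x\cdot y),
\]
where $r_\ell:=\dim\mathbb H_\ell^n$ and $\{Y_{\ell,m}\}_{m=1}^{r_\ell}$ is an $L^2(\sigma_n)$-orthonormal basis of $\mathbb H_\ell^n$. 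The second is that every even $f\in\Pi_{2k}$ has a Gegenbauer expansion $f=\sum_{i=0}^{k}f_{2i}P_{2i}^{(n)}$ involving only even indices (by parity), whose $P_0^{(n)}$-coefficient is $f_0=\int_{-1}^1 f\,d\mu_n$ (by orthogonality, since $P_0^{(n)}\equiv 1$). Two equivalences are then immediate: (i)$\Leftrightarrow$(iv) is Definition~\ref{def-k-k}, and (i)$\Leftrightarrow$(vii) is the parity observation made right after it, since each $P_{2i}^{(n)}$ is even and hence $\mathcal M_{2i}(-C)=\mathcal M_{2i}(C)$.

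The central step is (iv)$\Leftrightarrow$(vi). Summing the addition formula over all pairs $x_r,x_s\in C$ yields the sum-of-squares identity
\[
\mathcal M_{2i}(C)=\sum_{r,s=1}^N P_{2i}^{(n)}(x_r\cdot x_s)=\frac{1}{r_{2i}}\sum_{m=1}^{r_{2i}}\Bigl(\sum_{j=1}^N Y_{2i,m}(x_j)\Bigr)^2,
\]
so $\mathcal M_{2i}(C)=0$ exactly when $\sum_j Y_{2i,m}(x_j)=0$ for every $m$; letting $i$ range over $1,\dots,k$ and using linearity in $Y$, this is precisely (vi). For (vi)$\Rightarrow$(v) I would expand an even $f\in\Pi_{2k}$ as above and apply the addition formula in the variable $x$: $\sum_j P_{2i}^{(n)}(x\cdot x_j)=r_{2i}^{-1}\sum_m Y_{2i,m}(x)\sum_j Y_{2i,m}(x_j)$, which vanishes for $i\geq 1$ by (vi), while the $i=0$ term contributes $N$; hence $U^f(x,C)=f_0 N=|C|\int_{-1}^1 f\,d\mu_n$. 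Conversely, taking $f=P_{2i}^{(n)}$ (so $f_0=0$) in (v) gives $\sum_j P_{2i}^{(n)}(x\cdot x_j)\equiv 0$ on $\mathbb S^{n-1}$; evaluating at $x=x_r$ and summing over $r$ recovers $\mathcal M_{2i}(C)=0$, i.e.\ (v)$\Rightarrow$(iv).

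It remains to bring in the Waring conditions. Since $Q_{2k}\in\mathbb P_{2k}^n$ is homogeneous of degree $2k$, one has $Q_{2k}(x)=\|x\|^{2k}Q_{2k}(x/\|x\|)$ for $x\neq 0$, so (ii)$\Leftrightarrow$(iii). For (iv)$\Rightarrow$(iii) I would expand $t^{2k}=\sum_{i=0}^{k}a_{2i}P_{2i}^{(n)}(t)$, where $a_0=\int_{-1}^1 t^{2k}\,d\mu_n=c_{2k}$; restricting to $\mathbb S^{n-1}$ and using that $\sum_j P_{2i}^{(n)}(x\cdot x_j)\equiv 0$ for $i\geq 1$ (which follows from (iv) via the addition formula, as above) gives $\sum_j(x\cdot x_j)^{2k}=c_{2k}N$, which is (iii). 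For (ii)$\Rightarrow$(iv), the Laplacian identity $\triangle Q_{2\ell}=2\ell(2\ell-1)Q_{2\ell-2}$ quoted in the text forces $Q_{2i}\equiv 0$ for $i=1,\dots,k$, hence $\sum_s(x_r\cdot x_s)^{2j}=c_{2j}N$ for $0\le j\le k$; then writing $P_{2i}^{(n)}(t)=\sum_{j=0}^{i}b_{2j}t^{2j}$ gives $\mathcal M_{2i}(C)=N^2\sum_{j}b_{2j}c_{2j}=N^2\int_{-1}^1 P_{2i}^{(n)}\,d\mu_n=0$ for $i\geq 1$, which is (iv). This closes the web of implications.

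I expect no single hard estimate; the work lies in organizing the implications and in correctly invoking the addition formula, the identity $c_\ell=\int_{-1}^1 t^\ell\,d\mu_n$, and the homogeneity of $Q_{2k}$. The one genuinely substantive point is the sum-of-squares identity for $\mathcal M_{2i}(C)$: it is exactly the bridge that converts the vanishing-moment definition into the pointwise statements about spherical harmonics and about constancy of the potential, and once it is in place the remaining equivalences are essentially bookkeeping with Gegenbauer expansions.
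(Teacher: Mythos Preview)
The paper does not actually supply a proof of this lemma: it is stated with external citations (to \cite{W2018}, \cite{BHS}, \cite{BOT15}) and then used as a black box. Your proposal is correct and is essentially the standard argument one finds in those references, organized around the addition formula for Gegenbauer polynomials and the resulting sum-of-squares identity for the moments. There is nothing to compare against in the paper itself; your write-up would serve as a self-contained proof where the paper has none.
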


\subsection{Organization of the paper} In Section \ref{Genbounds}, formulations of linear programs to be solved later are given and general linear programming lower and upper bounds are developed. 
Section \ref{Prelim} contains necessary background material: degree of precision of Gauss-Gegenbauer quadratures and properties of orthogonal polynomials with respect to certain signed measures that are necessary for solving the linear programs. Section~\ref{main} contains the main results of this paper, bounds \eqref{PolarizationULB-e1}, \eqref{PolarizationULB-e2}, \eqref{PolarizationUUB-e}, and \eqref{PolarizationUUB-e3} and Theorem \ref{minmax-maxmin-extract}. The proofs of the universal bounds in \eqref{PolarizationULB-e1}, \eqref{PolarizationULB-e2}, \eqref{PolarizationUUB-e}, and \eqref{PolarizationUUB-e3} are found in Sections~\ref{Ubounds} and~\ref{section-UUB} (lower and upper
bounds, respectively), where the conditions for their attaining are also provided. In Section \ref{kk-optimality}, we consider the special case $h(t)=t^{2k}$ and
prove that spherical $(k,k)$-designs (when they exist) provide 
the best $h$-polarization, solving the problem 
of finding quantities $m^h_N$ and $M^h_N$ in this case (see Theorem \ref{minmax-maxmin-extract}).

\section{General linear programming bounds for polarization of spherical $(k,k)$-designs}\label{Genbounds}

\subsection{Constant polynomial potentials}

Similarly to the case of spherical $\tau$-designs, $(k,k)$-designs have a discrete potential which is constant over the sphere for polynomial potential functions of a certain type. This follows directly from \eqref{Waring-on-sphere} and is given by identity \eqref{k-k-constant}
from Lemma \ref{k-k-lemma} 
which holds for every $(k,k)$-design $C \subset \mathbb{S}^{n-1}$ and 
every even polynomial $f \in \Pi_{2k}$ with Gegenbauer expansion $f(t)=\sum_{i=0}^{k} f_{2i}P_{2i}^{(n)}(t)$.

Property \eqref{k-k-constant} allows the derivation of Delsarte-Yudin type (cf. \cite{Del1,Del2,Del3,DGS,Y}) lower and upper polarization bounds as shown in the next two subsections. 

\subsection{Lower bounds}

We first define a class of polynomials which is suitable for linear 
programming lower bounds of Delsarte-Yudin type. 

\begin{definition} \label{L-k-k}
Given a potential function $h$ as in \eqref{Eq1} and integers $n \geq 2$ and $k\geq 1$, denote by $\mathcal{L}(k;h)$ the set of polynomials
\begin{equation} \label{L(k,h)}
\mathcal{L}(k;h):=\left\{f(t)=\sum_{i=0}^{k} f_{2i}P_{2i}^{(n)}(t) \in \Pi_{2k}: f(t) \leq h(t), t \in [-1,1] \right\}. \end{equation}
\end{definition}

Note that the set $\mathcal{L}(k;h)$ is nonempty for every $k$ and $h$. 

The (folklore) general linear programming lower bound is immediate. We present a proof for completeness. 

\begin{theorem}\label{PULB-k-k-general}
Given $h$ as in \eqref{Eq1}, $n \geq 2$, and $k\geq 1$, let $C \subset \mathbb{S}^{n-1}$ be a spherical $(k,k)$-design,
$|C|=N$, and let $f \in  \mathcal{L}(k;h)$. 
Then 
\begin{equation} \label{k-k-constantL}
U^h(x,C) \geq f_0N,\ \ \ x\in \mathbb S^{n-1},
\end{equation} 
and, consequently, 
\begin{equation} \label{pulb-1}
m_{N}^h \geq m_{N}^h(k) \geq N \cdot \sup \{ f_0 : f \in \mathcal{L}(k;h) \}.
\end{equation}
\end{theorem}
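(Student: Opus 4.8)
The plan is to combine the pointwise domination $f\le h$ on $[-1,1]$ with the constancy of polynomial potentials for $(k,k)$-designs recorded in Lemma~\ref{k-k-lemma}(v).

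First I would fix $x\in\mathbb{S}^{n-1}$ and any $f\in\mathcal{L}(k;h)$. Since each inner product $x\cdot x_i$ lies in $[-1,1]$ and $f(t)\le h(t)$ there, summing over $i=1,\dots,N$ gives
\[
U^h(x,C)=\sum_{i=1}^N h(x\cdot x_i)\ \ge\ \sum_{i=1}^N f(x\cdot x_i)=U^f(x,C);
\]
the inequality holds termwise even when some $h(x\cdot x_i)=+\infty$, using the convention $t+\infty=\infty$, and at the endpoints $t=\pm1$ it follows from the continuous (extended) behavior of $h$ assumed in \eqref{Eq1}. Because $f$ is an even polynomial of degree at most $2k$ with Gegenbauer expansion $f(t)=\sum_{i=0}^{k}f_{2i}P_{2i}^{(n)}(t)$, Lemma~\ref{k-k-lemma}(v) applies to the $(k,k)$-design $C$ and yields $U^f(x,C)=f_0|C|=f_0N$, where $f_0=\int_{-1}^1 f\,d\mu_n$ is simultaneously the coefficient of $P_0^{(n)}\equiv 1$ in the expansion of $f$ and the $\mu_n$-average of $f$. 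This establishes \eqref{k-k-constantL}.

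Next I would take the infimum over $x\in\mathbb{S}^{n-1}$ in \eqref{k-k-constantL}, obtaining $m^h(C)\ge f_0N$. Since $C$ is an arbitrary $(k,k)$-design with $|C|=N$ (such a design exists by hypothesis) and $f\in\mathcal{L}(k;h)$ is arbitrary, passing to the supremum first over all such $C$ and then over $f$ gives $m_N^h(k)\ge N\sup\{f_0:f\in\mathcal{L}(k;h)\}$. Finally, $m_N^h\ge m_N^h(k)$ because every $(k,k)$-design of cardinality $N$ is in particular a spherical code of cardinality $N$, as observed following \eqref{PolMaxDes}. Chaining these inequalities yields \eqref{pulb-1}.

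\textbf{Main obstacle.} There is essentially no obstacle: this is the standard Delsarte--Yudin linear programming bound adapted to $(k,k)$-designs, and all the content lies in Lemma~\ref{k-k-lemma}(v), which converts the auxiliary polynomial into a constant potential. The only points requiring care are the bookkeeping with the extended-real values $h(\pm1)$ so that the termwise inequality and the sum are meaningful, and the identification of $f_0$ both as the $P_0^{(n)}$-coefficient of $f$ and as $\int_{-1}^1 f\,d\mu_n$ — which is exactly what the cited lemma supplies.
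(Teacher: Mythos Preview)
Your argument is correct and follows essentially the same route as the paper's own proof: bound $U^h(x,C)$ below by $U^f(x,C)$ using $f\le h$, invoke Lemma~\ref{k-k-lemma}(v) to identify $U^f(x,C)=f_0N$, and then pass to infima and suprema to obtain \eqref{pulb-1}. The only differences are cosmetic---you make explicit the extended-real bookkeeping and the two interpretations of $f_0$, which the paper leaves implicit.
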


\begin{proof} 
Let $C=\{x_1,\ldots,x_N\} \subset \mathbb{S}^{n-1}$ be a spherical $(k,k)$-design, $f \in  \mathcal{L}(k;h)$, and $x \in \mathbb{S}^{n-1}$. Then we consecutively have 
\begin{eqnarray*}
    U^h(x,C) &=& \sum_{i=1}^N h(x \cdot x_i) \\
           &\geq& \sum_{i=1}^N f(x \cdot x_i) \\
           &=& f_0N.
\end{eqnarray*}
We used the fact that $h(t) \geq f(t)$ for every $t \in [-1,1]$ for the inequality and identity 
\eqref {k-k-constant} for the last step.  

Since this is true for every point $x\in \mathbb{S}^{n-1}$, every polynomial $f \in \mathcal{L}(k;h)$ and every $(k,k)$-design $C$ with $|C|=N$, we conclude that
$$ m_{N}^h(k) \geq N \cdot \sup \{ f_0 : f \in \mathcal{L}(k;h) \}. $$
Finally, we have 
$ m_{N}^h \geq m_{N}^h(k) $
from defining equalities \eqref{max-min} and \eqref{PolMinDes}. 
\end{proof}

Thus, every polynomial from the set $\mathcal{L}(k;h)$ provides a lower bound on 
the general potential $U^h(\ \!\cdot\ \!,C)$, This bound is valid for every $(k,k)$-design $C \subset \mathbb{S}^{n-1}$ and gives rise to the following linear program:
\begin{equation}\label{PULB_LP_Program} 
\left\{ \begin{array}{rr}
{\rm given}: & \quad n, \ k,  \ h  \\
{\rm maximize}: & \quad f_0  \\
{\rm subject \ to:} &\quad  f \in \mathcal{L}(k;h).
 \end{array}  \right.
\end{equation}

In Section \ref{Ubounds}, we solve program \eqref{PULB_LP_Program}
for potentials given by functions of inner product squared whose $(k+1)$-th derivatives do not change sign in $(0,1)$. In Section \ref{kk-optimality}, we consider the special case $h(t)=t^{2k}$ and show that $(k,k)$-designs are optimal with respect to bounds \eqref{k-k-constantL} and  \eqref{pulb-1} in this case.

\subsection{Upper bounds} \label{UUbounds}

We now derive upper bounds of Delsarte-Yudin type for polarization of $(k,k)$-designs. We start again by defining an appropriate class of polynomials. 

\begin{definition} \label{U-k-k}
Given a potential function $h$ as in \eqref{Eq1}, integers $n \geq 2$ and $k\geq 1$, and a number $s\in [0,1]$, denote by $\mathcal{U}(k,s;h)$ the set of polynomials
$$ \mathcal{U}(k,s;h):=\left\{q(t)=\sum_{i=0}^{k} q_{2i}P_{2i}^{(n)}(t) 
\in \Pi_{2k}:\, q(t) \geq h(t), \ t \in [-s,s] \right\}. $$
\end{definition}
Note that the set $\mathcal{U}(k,s;h)$ is non-empty if $s<1$ or if $s=1$ and $h(1),h(-1)<\infty$.

Given a code $C\subset \mathbb{S}^{n-1}$ and a point 
$x\in \mathbb{S}^{n-1}$, we denote by 
$$ T(x,C):= \{ x \cdot y : y \in C \} $$
the collection of inner products of $x$ with the points of $C$ and define 
$$r(C):= \min_{x\in \mathbb{S}^{n-1}} \max \ \! T(x,C\cup (-C)).$$

\begin{remark}
We remark that $r(C)$ is the covering radius (represented in terms of inner products) of $C\cup(-C)$. 
\end{remark}

Assume that $N$ is such that there exists a spherical $(k,k)$-design 
on $\mathbb{S}^{n-1}$. Then we define
\begin{equation}\label{R_{k,N}}
\begin{split}
    r_{k,N} &:= \inf \{ r(C) \, :\, C \ {\rm is \ a}\ (k,k)\mbox{-design} \text{ and } |C|=N\}, \\
R_{k,N} &:= \sup \{ r(C) \, :\, C  \ {\rm is \ a}\ (k,k)\mbox{-design}\text{ and } |C|=N\}.
\end{split}
\end{equation}

The counterpart of Theorem \ref{PULB-k-k-general} for upper bounds is below. 

\begin{theorem}\label{PUUB-k-k-general}
Given $h$ as in \eqref{Eq1}, let $C \subset \mathbb{S}^{n-1}$ be a spherical $(k,k)$-design, 
$|C|=N$, and $n \geq 2$, $k\geq 1$, and $s\in (0,1]$ be such that $\mathcal{U}(k,s;h)\neq \emptyset$. Then the following holds:

\begin{itemize}
\item[i)] If $s>r(C)$, then for every $q\in \mathcal{U}(k,s;h)$,
\begin{equation} \label{k-k-constantU-2}
m^h(C)=\inf_{x \in \mathbb{S}^{n-1}} U^h(x,C) \leq q_0N.
\end{equation} 

\item[ii)] If $s>R_{k,N}$, then 
$$ m_{N}^h(k) \leq  N \cdot \inf \{ q_0 : q \in \mathcal{U}(k,s;h) \}. $$

\item[iii)] When $s=1$ and both $h(1)$ and $h(-1)$ are finite, for every $q\in \mathcal U(k,1;h)$, we have 
$$ M^h(C)=\sup_{x \in \mathbb{S}^{n-1}} U^h(x,C) \leq q_0N. $$
\end{itemize}
\end{theorem}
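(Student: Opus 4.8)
The plan is to mirror the structure of the proof of Theorem~\ref{PULB-k-k-general}, but with the key subtlety that the polynomial inequality $q(t) \geq h(t)$ now holds only on the subinterval $[-s,s]$ rather than on all of $[-1,1]$. So the central task is to verify that, under the stated hypotheses, every inner product $x \cdot y$ that actually occurs in the relevant sum lies inside $[-s,s]$; then identity \eqref{k-k-constant} applied to $q$ finishes the argument exactly as before.

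For part~(i), fix $q \in \mathcal{U}(k,s;h)$ and let $x \in \mathbb{S}^{n-1}$ be a point realizing the covering-radius-type quantity, i.e.\ $\max T(x, C \cup (-C)) \leq r(C) < s$. Since $C \cup (-C)$ is symmetric, the set $T(x, C \cup (-C))$ is symmetric about $0$, so its maximum being $\leq r(C)$ forces $T(x, C \cup (-C)) \subseteq [-r(C), r(C)] \subseteq [-s, s]$. In particular $x \cdot y \in [-s,s]$ for every $y \in C$. Therefore $h(x \cdot y) \leq q(x \cdot y)$ for each such $y$, and summing gives $U^h(x, C) \leq \sum_{y \in C} q(x \cdot y) = q_0 N$ by \eqref{k-k-constant} applied to the even polynomial $q \in \Pi_{2k}$. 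Since $m^h(C) = \inf_{z} U^h(z,C) \leq U^h(x,C)$, this yields \eqref{k-k-constantU-2}. (One should note here that the infimum defining $m^h(C)$ is attained or at least bounded above by this particular value; continuity of $h$ on $(-1,1)$ and the extended-sense continuity at $\pm 1$ guarantee $U^h(\cdot, C)$ is well-defined, and we only need the one-sided estimate.)

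Part~(ii) is then a supremum-over-designs consequence of part~(i). If $s > R_{k,N} = \sup\{ r(C) : C \text{ a } (k,k)\text{-design}, |C| = N\}$, then $s > r(C)$ for every $(k,k)$-design $C$ with $|C| = N$, so part~(i) applies to each such $C$, giving $m^h(C) \leq q_0 N$ for every fixed $q \in \mathcal{U}(k,s;h)$. Taking the supremum over all such $C$ gives $m_N^h(k) \leq q_0 N$, and then the infimum over $q \in \mathcal{U}(k,s;h)$ yields the claimed bound. For part~(iii), when $s = 1$ and $h(1), h(-1) < \infty$, the inequality $q(t) \geq h(t)$ holds on all of $[-1,1]$, so for \emph{any} $x \in \mathbb{S}^{n-1}$ and any $y \in C$ we have $h(x \cdot y) \leq q(x \cdot y)$ trivially (no covering-radius argument needed); summing and applying \eqref{k-k-constant} gives $U^h(x, C) \leq q_0 N$ for every $x$, hence $M^h(C) = \sup_x U^h(x,C) \leq q_0 N$.

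The only genuine obstacle is the bookkeeping in part~(i): confirming that the symmetrized set $T(x, C \cup (-C))$ — rather than just $T(x,C)$ — is what must lie in $[-s,s]$, and that this is exactly what $r(C) < s$ buys us. The reason the symmetrized version is the right object is that when $h$ is an even symmetrization of a singular kernel, $h(-1) = +\infty$ too, so one must control inner products near $-1$ as well as near $1$; working with $C \cup (-C)$ handles both ends simultaneously and is precisely why $r(C)$ was defined via $\max T(x, C \cup (-C))$. Everything else is a direct transcription of the lower-bound argument with inequalities reversed.
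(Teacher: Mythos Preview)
Your proof is correct and follows essentially the same approach as the paper's: for part~(i) you pick a point $x$ on the sphere where $\max T(x, C\cup(-C))=r(C)<s$ so that all inner products lie in $[-s,s]$, then combine $h\le q$ on that interval with identity~\eqref{k-k-constant}; parts~(ii) and~(iii) are derived exactly as the paper does. The only cosmetic difference is that the paper phrases part~(i) as ``there exists $y$ with $\max T(y,C\cup(-C))<s$'' rather than invoking the minimizing point directly, but the content is identical.
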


\begin{remark}
Since the set $\mathcal U(k,s;h)$ can only expand as $s$ decreases, the value of the infimum in ii) decreases as $s$ approaches $R_{k,N}$ from the right. Then Theorem \ref{PUUB-k-k-general} ii) implies that
$$
m_N^h(k)\leq N\ \cdot\!\!\!\!\!\lim\limits_{s\to (R_{k,N})^+}\!\!\inf\{q_0 : q\in \mathcal U(k,s;h)\}.
$$
\end{remark}

{\bf Proof of Theorem \ref{PUUB-k-k-general}.}
Assume first that $s>r(C)$. Then there exists a point $y\in \mathbb S^{n-1}$ such that $\max T(y,C\cup (-C))<s$. Consequently, for every $i=1,\ldots,N$, we have $-s<x_i\cdot y<s$. Since $q(t)\geq h(t)$, $t\in [-s,s]$, and $C$ is a $(k,k)$-design, by Lemma \ref{k-k-lemma} v) we have
$$
m^h(C)=\inf\limits_{x\in \mathbb S^{n-1}}\sum\limits_{i=1}^{N}h(x\cdot x_i)\leq \sum\limits_{i=1}^{N}h(y\cdot x_i)\leq \sum\limits_{i=1}^{N}q(y\cdot x_i)=q_0N,
$$
which proves i). Assume next that $s>R_{k,N}$. Then $s>r(C)$ for every $N$-point $(k,k)$-design $C\subset \mathbb S^{n-1}$. From i), for each $q\in \mathcal U(k,s;h)$, we have
$$
m^h_N(k)=\sup\limits_{C - (k,k)\text{-design}\atop |C|=N}m^h(C)\leq q_0N.
$$
Taking now the infimum over $q\in \mathcal U(k,s;h)$, we have
$m^h_N(k)\leq N\cdot \inf\{q_0 : q\in \mathcal U(k,s;h)\}$.

Finally, assume that $s=1$ and $h(1),h(-1)<\infty$. Since $C$ is a $(k,k)$-design, by Lemma \ref{k-k-lemma} v) we obtain that
$$
M^h(C)=\sup\limits_{x\in \mathbb S^{n-1}}\sum\limits_{i=1}^{N}h(x\cdot x_i)\leq \sup\limits_{x\in \mathbb S^{n-1}}\sum\limits_{i=1}^{N}q(x\cdot x_i)=q_0N,
$$
which proves iii).\hfill$\Box $

This gives rise to the following linear program:
\begin{equation}\label{PUUB_LP_Program} 
\left\{ 
 \begin{array}{rr}
{\rm given}: & \quad n, \ k, \ s, \ h  \\
{\rm minimize}: & \quad q_0  \\
{\rm subject \ to:} &\quad  q \in \mathcal{U}(k,s;h).
 \end{array}  \right.
\end{equation}

In Section \ref{section-UUB}, we solve program \eqref{PUUB_LP_Program} for potentials given by functions of inner product squared whose $(k+1)$-th derivatives do not change sign in $(0,1)$. The solution depends on the existence of a $(k,k)$-design and a proper  
choice of the range for $s$. In Section \ref{kk-optimality}, we consider the special case $h(t)=t^{2k}$, and show that $(k,k)$-designs  are optimal with
respect to bound \eqref{k-k-constantU-2} when they exist.

\section{Some background material}\label{Prelim}

This section contains the background material used to state and prove the main results of this paper. 

\subsection{Gauss-Gegenbauer quadratures}\label{sGJQ} A quadrature (or a quadrature formula) is any weighted sum of the form $\sum\limits_{j=1}^{M}b_jf(t_j)$ used to compute approximately a given definite integral of the function $f$. A quadrature $\sum\limits_{j=1}^{M}b_jf(t_j)$ is called exact on a function $f$ if the sum $\sum\limits_{j=1}^{M}b_jf(t_j)$ equals the value of the given definite integral for $f$. We will recall the following classical quadrature and discuss its exactness. 

It is known \cite[Theorem 3.3.1]{Sze1975} that the $k+1$ zeros of the Gegenbauer polynomial $P_{k+1}^{(n)}$ are all real, distinct, and lie in $(-1,1)$ (see also the case $s=1$ of Lemma \ref{int_s}). 
Throughout the paper, denote by $-1<\alpha_1<\ldots<\alpha_{k+1}<1$
the zeros of $P_{k+1}^{(n)}$. Since the density defining the measure $\mu_n$ in \eqref{mu_n} is an even function, the polynomial $P_{k+1}^{(n)}$ is even if $k+1$ is even and odd if $k+1$ is odd. This fact implies that $\alpha_i$'s are symmetric about $0$. Let $L_i$, $i=1,\ldots,k+1$, denote the fundamental Lagrange polynomials for the system of nodes $\{\alpha_1,\ldots,\alpha_{k+1}\}$; that is, $L_1,\ldots,L_{k+1}$ have degree $k$ and $L_i(\alpha_j)=0$ if $j\neq i$
and $L_i(\alpha_j)=1$ if $i=j$. They can be computed as \cite[Equation (14.1.2)]{Sze1975}
$$
L_i(t)=\frac {P_{k+1}^{(n)}(t)}{(t-\alpha_i)(P_{k+1}^{(n)})'(\alpha_i)},\ \ \ i=1,\ldots,k+1.
$$

The Gauss-Gegenbauer quadrature is the following quadrature for computing approximately the integral on the left-hand side: 
\begin{equation} \label{GJQ}
\gamma_{n} \int_{-1}^1 f(t) (1-t^2)^{(n-3)/2} \, dt\approx\sum_{j=1}^{k+1} \rho_j f(\alpha_j),
\end{equation}
where the weights $\rho_i$ are defined as
\begin{equation}\label{rho_weights'}
\rho_i:=\gamma_n\int\limits_{-1}^{1}L_i(t)(1-t^2)^{(n-3)/2}\ \! dt,\ \ \ i=1,\ldots,k+1.
\end{equation}
Since $\rho_i=\rho_iL_i(\alpha_i)=\sum_{j=1}^{k+1}\rho_jL_i(\alpha_j)$, equality holds in \eqref{GJQ} for $f=L_i$, $i=1,\ldots,k+1$. The polynomials $L_1,\ldots,L_{k+1}$ form a basis for the space $\Pi_k$. Therefore, by linearity, we have ``=" in \eqref{GJQ} for every polynomial $f\in \Pi_k$; that is quadrature \eqref{GJQ} is exact for every polynomial of degree up to $k$. Quadratures with weights defined as in \eqref{rho_weights'} for an arbitrary system of $k+1$ nodes are called interpolatory quadratures. Quadrature \eqref{GJQ} whose nodes are zeros of $P_{k+1}^{(n)}$ is, in fact, exact on polynomials of a higher degree.

\begin{lemma}\label{LemmaGJQ}(\cite[Theorems 3.4.1 and 3.4.2]{Sze1975})
For every $n\geq 2$ and $k\geq 1$, quadrature \eqref{GJQ} is exact for every polynomial from $\Pi_{2k+1}$. 
Furthermore, $\rho_i>0$ and $\rho_i=\rho_{k+2-i}$, $i=1,\ldots,k+1$.
\end{lemma}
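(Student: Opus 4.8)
This is the classical Gauss quadrature statement (attributed to Szeg\H{o}), so the plan is to reproduce the standard argument while fitting it to the setup developed above. First I would prove exactness on $\Pi_{2k+1}$. Given $f\in \Pi_{2k+1}$, divide by $P_{k+1}^{(n)}$ to write $f=g\cdot P_{k+1}^{(n)}+r$ with $g,r\in \Pi_k$. Since $P_{k+1}^{(n)}$ is $\mu_n$-orthogonal to every polynomial of degree at most $k$ (expand such a polynomial in the Gegenbauer basis $P_0^{(n)},\dots,P_k^{(n)}$), we get $\int_{-1}^1 f\,d\mu_n=\int_{-1}^1 r\,d\mu_n$. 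On the quadrature side, each node $\alpha_j$ is a zero of $P_{k+1}^{(n)}$, so $\sum_{j=1}^{k+1}\rho_j f(\alpha_j)=\sum_{j=1}^{k+1}\rho_j r(\alpha_j)$. The discussion preceding the lemma already shows \eqref{GJQ} is exact on $\Pi_k$ (it is interpolatory at the $k+1$ nodes $\alpha_1,\dots,\alpha_{k+1}$), hence $\sum_{j=1}^{k+1}\rho_j r(\alpha_j)=\int_{-1}^1 r\,d\mu_n$, and combining the two displays yields $\sum_{j=1}^{k+1}\rho_j f(\alpha_j)=\int_{-1}^1 f\,d\mu_n$.

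For positivity, I would apply the exactness just proved to $L_i^2\in \Pi_{2k}\subset \Pi_{2k+1}$: since $L_i(\alpha_j)=\delta_{ij}$, the quadrature sum collapses to $\rho_i$, so $\rho_i=\gamma_n\int_{-1}^1 L_i(t)^2(1-t^2)^{(n-3)/2}\,dt>0$, the integrand being nonnegative, continuous, and not identically zero on $[-1,1]$. For symmetry, I would use that the density $(1-t^2)^{(n-3)/2}$ is even, whence the zeros of $P_{k+1}^{(n)}$ satisfy $\alpha_{k+2-i}=-\alpha_i$; then $t\mapsto L_i(-t)$ is a polynomial in $\Pi_k$ equal to $1$ at $\alpha_{k+2-i}$ and $0$ at the other nodes, so $L_{k+2-i}(t)=L_i(-t)$ by uniqueness of the fundamental Lagrange polynomials. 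Substituting $t\mapsto -t$ in \eqref{rho_weights'} and using the evenness of the weight gives $\rho_{k+2-i}=\rho_i$.

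I do not expect a genuine obstacle here — this is textbook material — and the only places needing care are the degree bookkeeping in the division step (ensuring $g,r\in\Pi_k$ so that orthogonality of $P_{k+1}^{(n)}$ against $\Pi_k$ can be invoked) and the identity $L_{k+2-i}(t)=L_i(-t)$ underlying the symmetry of the weights.
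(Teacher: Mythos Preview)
Your proposal is correct and follows essentially the same approach as the paper: polynomial division by $P_{k+1}^{(n)}$ plus orthogonality for exactness, applying the quadrature to $L_i^2$ for positivity, and the identity $L_{k+2-i}(t)=L_i(-t)$ for symmetry. The only cosmetic difference is that the paper derives this last identity from the explicit formula $L_i(t)=P_{k+1}^{(n)}(t)/[(t-\alpha_i)(P_{k+1}^{(n)})'(\alpha_i)]$ and the parity of $P_{k+1}^{(n)}$, whereas you invoke uniqueness of the fundamental Lagrange polynomials.
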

\begin{proof} If $f\in \Pi_{2k+1}$ is any polynomial, there exist polynomials $q,r\in \Pi_k$ such that $$
f(t)=P_{k+1}^{(n)}(t)q(t)+r(t).
$$
Since $P_{k+1}^{(n)}$ is orthogonal to $\Pi_k$ with respect to the measure $\mu_n$ (see notation \eqref{mu_n}), we have
$$
\gamma_n\int\limits_{-1}^{1}f(t)(1-t^2)^{(n-3)/2}\ \! dt=\int\limits_{-1}^{1}P_{k+1}^{(n)}(t)q(t) \! d\mu_n(t)+\int\limits_{-1}^{1}r(t)\ \! d\mu_n(t)=\int\limits_{-1}^{1}r(t)\ \! d\mu_n(t).
$$
On the other hand, since $\alpha_1,\ldots,\alpha_{k+1}$ are zeros of $P_{k+1}^{(n)}$, we have
$$
\sum\limits_{j=1}^{k+1}\rho_jf(\alpha_j)=\sum\limits_{j=1}^{k+1}\rho_jP_{k+1}^{(n)}(\alpha_j)q(\alpha_j)+\sum\limits_{j=1}^{k+1}\rho_jr(\alpha_j)=\sum\limits_{j=1}^{k+1}\rho_jr(\alpha_j).
$$
Since quadrature \eqref{GJQ} was shown to be exact on $\Pi_k$ and $r\in \Pi_k$, we have
$$
\int\limits_{-1}^{1}r(t)\ \! d\mu_n(t)=\sum\limits_{j=1}^{k+1}\rho_jr(\alpha_j).
$$
Then quadrature \eqref{GJQ} is exact on $f$. In view of arbitrariness of $f\in \Pi_{2k+1}$, quadrature \eqref{GJQ} is exact on $\Pi_{2k+1}$.

Since quadrature \eqref{GJQ} remains exact on the squares of the fundamental Lagrange polynomials $L_i(t)$, $i=1,\ldots,k+1$, for the system of nodes $\{\alpha_1,...,\alpha_{k+1}\}$, we have 
$$
\rho_i=\sum\limits_{j=1}^{k+1}\rho_j(L_i(\alpha_j))^2=\int\limits_{-1}^{1}(L_i(t))^2d\mu_n(t)>0, \ \ i=1,...,k+1.
$$
Since $P_{k+1}^{(n)}$ has the same parity as $k+1$, we have 
\begin{equation*}
\begin{split}
L_i(-t)&=\frac{P_{k+1}^{(n)}(-t)}{(-t-\alpha_i)\left(P_{k+1}^{(n)}\right)^\prime(\alpha_i)}=\frac{P_{k+1}^{(n)}(t)}{(t+\alpha_i)\left(P_{k+1}^{(n)}\right)^\prime(-\alpha_i)}\\
&=\frac{P_{k+1}^{(n)}(t)}{(t-\alpha_{k+2-i})\left(P_{k+1}^{(n)}\right)^\prime(\alpha_{k+2-i})}=L_{k+2-i}(t),
\end{split}
\end{equation*}
from which using definition \eqref{rho_weights'} we can easily obtain that $\rho_i=\rho_{k+2-i}$.
\end{proof}

Consider now the set $-1=\beta_0<\beta_1<\ldots<\beta_k<\beta_{k+1}=1$, where $\beta_1,\ldots,\beta_k$ are zeros of the Gegenbauer polynomial $P_k^{(n+2)}$. They are all real, simple, lie in $(-1,1)$ (see \cite[Theorem 3.3.1]{Sze1975}) and are symmetric about $0$. Denote by $\W L_i$, $i=0,\ldots,k+1$, the fundamental Lagrange polynomials for the system of nodes $\{\beta_0,\ldots,\beta_{k+1}\}$ and let
\begin{equation}\label{delta_i}
\delta_i:=\gamma_n\int\limits_{-1}^{1}\W L_i(t)(1-t^2)^{(n-3)/2}\ \! dt,\ \ \ i=0,\ldots,k+1.
\end{equation}
Using the explicit formulas for $\W L_i$'s in terms of $P_k^{(n+2)}$, we have
\begin{eqnarray} 
\delta_i &:=& \gamma_n\int_{-1}^1 \frac{(t^2-1)P_k^{(n+2)}(t)}{(t-\beta_i)(\beta_i^2-1)\left(P_k^{(n+2)}\right)^\prime (\beta_i)}(1-t^2)^{(n-3)/2}\, dt,\quad i=1,\dots,k, \label{delti-1} \\
\delta_0 &:=& \gamma_n\int_{-1}^1 \frac{(t-1)P_k^{(n+2)}(t)}{-2P_k^{(n+2)}(-1)}(1-t^2)^{(n-3)/2}\, dt, \\ 
\delta_{k+1} &:=& \gamma_n\int_{-1}^1 \frac{(t+1)P_k^{(n+2)}(t)}{2P_k^{(n+2)} (1)}(1-t^2)^{(n-3)/2}\, dt. \label{delti-3}
\end{eqnarray}
Analogously to Lemma \ref{LemmaGJQ}, one can prove the following.
\begin{lemma}\label{LemmaGJQ1}
Given $n\geq 2$ and $k\geq 1$, the quadrature formula
\begin{equation}\label{GJQ_beta'}
\gamma_{n} \int_{-1}^1 f(t) (1-t^2)^{(n-3)/2} \, dt\approx \sum_{i=0}^{k+1} \delta_i f(\beta_i)
\end{equation}
is exact on $\Pi_{2k+1}$. Furthermore, $\delta_i>0$ and $\delta_{k+1-i}=\delta_i$, $i=0,\ldots,k+1$.
\end{lemma}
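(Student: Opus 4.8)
The plan is to argue exactly as in the proof of Lemma~\ref{LemmaGJQ}, the only new feature being the two fixed endpoints $\beta_0=-1$ and $\beta_{k+1}=1$. Since there are $k+2$ nodes and the weights $\delta_i$ are defined interpolatorily through \eqref{delta_i}, expanding any $f$ in the basis $\{\W L_0,\dots,\W L_{k+1}\}$ of $\Pi_{k+1}$ shows at once that \eqref{GJQ_beta'} is exact on $\Pi_{k+1}$. To push exactness up to $\Pi_{2k+1}$, introduce the node polynomial
$$
\omega(t):=\prod_{i=0}^{k+1}(t-\beta_i)=(t^2-1)\prod_{j=1}^{k}(t-\beta_j);
$$
since $\beta_1,\dots,\beta_k$ are by definition the zeros of $P_k^{(n+2)}$, the factor $\prod_{j=1}^{k}(t-\beta_j)$ is a nonzero constant multiple of $P_k^{(n+2)}(t)$, so $\omega$ is a constant multiple of $(t^2-1)P_k^{(n+2)}(t)$.

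Next, take any $f\in\Pi_{2k+1}$ and divide by $\omega$: $f=\omega q+r$ with $\deg q\le k-1$ and $\deg r\le k+1$. The quadrature sum of $\omega q$ vanishes because every $\beta_i$ is a root of $\omega$. For the integral of $\omega q$, using $\omega\propto(t^2-1)P_k^{(n+2)}$ together with $(1-t^2)(1-t^2)^{(n-3)/2}=(1-t^2)^{((n+2)-3)/2}$, one gets (up to a nonzero constant factor)
$$
\gamma_n\int_{-1}^{1}P_k^{(n+2)}(t)\,q(t)\,(1-t^2)^{(n-1)/2}\,dt=0,
$$
by orthogonality of $P_k^{(n+2)}$ to $\Pi_{k-1}$ with respect to $\mu_{n+2}$. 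Since $\deg r\le k+1$, the quadrature is exact on $r$, hence on $f$; as $f$ was arbitrary, \eqref{GJQ_beta'} is exact on $\Pi_{2k+1}$.

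For positivity I would test this exactness against suitable nonnegative polynomials of degree $\le 2k+1$, the choice differing for interior versus boundary nodes. For $1\le i\le k$, take $p(t)=(1-t^2)\bigl(\prod_{1\le j\le k,\,j\ne i}(t-\beta_j)\bigr)^2$, of degree $2k$, nonnegative on $[-1,1]$, vanishing at every node except $\beta_i$ and strictly positive at $\beta_i$; applying \eqref{GJQ_beta'} gives $\delta_i\,p(\beta_i)=\gamma_n\int_{-1}^{1}p(t)(1-t^2)^{(n-3)/2}\,dt>0$, whence $\delta_i>0$. For the endpoints take $p(t)=(1-t)\bigl(\prod_{j=1}^{k}(t-\beta_j)\bigr)^2$ and $p(t)=(1+t)\bigl(\prod_{j=1}^{k}(t-\beta_j)\bigr)^2$, both of degree $2k+1$, to obtain $\delta_0>0$ and $\delta_{k+1}>0$. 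Finally, since the density $(1-t^2)^{(n-3)/2}$ is even and $P_k^{(n+2)}$ has the parity of $k$, the node set is symmetric about the origin, $\beta_{k+1-i}=-\beta_i$; then, exactly as in Lemma~\ref{LemmaGJQ}, $\W L_i(-t)=\W L_{k+1-i}(t)$, and the substitution $t\mapsto-t$ in \eqref{delta_i} gives $\delta_i=\delta_{k+1-i}$.

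The algebra in the division step and the parity identity for the $\W L_i$ are routine. The one place needing a little care is the degree accounting in the positivity argument: the naive test polynomial $\bigl(\omega(t)/(t-\beta_i)\bigr)^2$ has degree $2k+2$, just outside the exactness range, so for interior nodes one must instead exploit the built-in factor $(1-t^2)$ coming from the fixed endpoints, and for the endpoints one uses a single linear factor $1\mp t$. The genuinely new ingredient compared with Lemma~\ref{LemmaGJQ} is the observation that $\prod_{j=1}^{k}(t-\beta_j)\propto P_k^{(n+2)}(t)$, so the residual integral vanishes by orthogonality with respect to the shifted measure $\mu_{n+2}$ --- this is what turns a plainly interpolatory rule into one exact on $\Pi_{2k+1}$.
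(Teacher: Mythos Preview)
Your proof is correct and follows essentially the same route as the paper's: the division $f=(1-t^2)P_k^{(n+2)}q+r$ (your $\omega q+r$), orthogonality with respect to the weight $(1-t^2)^{(n-1)/2}$, and exactness on the remainder by the interpolatory property. Your positivity test polynomials $(1-t^2)\bigl(\prod_{j\ne i}(t-\beta_j)\bigr)^2$ and $(1\mp t)\bigl(\prod_{j}(t-\beta_j)\bigr)^2$ are, up to positive constants, exactly the paper's $v_i(t)=\frac{(1-\beta_i^2)(\W L_i(t))^2}{1-t^2}$ and $v_0,v_{k+1}$, and the symmetry argument via $\W L_i(-t)=\W L_{k+1-i}(t)$ is identical.
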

\begin{proof}
The proof is similar to that of Lemma \ref{LemmaGJQ}. Take any polynomial $f\in\Pi_{2k+1}$. There exist polynomials $q\in \Pi_{k-1}$ and $r\in \Pi_{k+1}$ such that
$
f(t)=(1-t^2)P_k^{(n+2)}(t)q(t)+r(t).
$
Then
\begin{equation*}
\begin{split}
\gamma_{n} \int_{-1}^1\!\! f(t) (1-t^2)^{(n-3)/2} \, dt&=\gamma_{n} \int_{-1}^1 \!\!\! P_k^{(n+2)}(t)q(t) (1-t^2)^{(n-1)/2} \, dt+\gamma_{n} \int_{-1}^1\!\!\! r(t) (1-t^2)^{(n-3)/2} \, dt\\
=&\gamma_{n} \int_{-1}^1\!\!\! r(t) (1-t^2)^{(n-3)/2} \, dt.
\end{split}
\end{equation*}
On the other hand, since the polynomial $(1-t^2)P_k^{(n+2)}(t)$ vanishes at each $\beta_i$, $i=0,\ldots,k+1$, we have
$$
\sum_{i=0}^{k+1}\delta_if(\beta_i)=\sum_{i=0}^{k+1}\delta_ir(\beta_i).
$$
Since quadrature \eqref{GJQ_beta'} is interpolatory, it is exact on $r\in\Pi_{k+1}$. Then it is exact on any $f\in \Pi_{2k+1}$.
To prove that $\delta_{k+1-i}=\delta_i$, $i=0,\ldots,k+1$, we observe that $\W L_i(-t)=\W L_{k+1-i}(t)$ due to the symmetry of the nodes $\beta_i$. To show the positivity of the weights, we let $v_i(t):=\frac {(1-\beta_i^2)(\W L_i(t))^2}{1-t^2}$, $i=1,\ldots,k$, which has a degree $2k$. We also let $v_0(t):=\frac{2(\W L_0(t))^2}{1-t}$ and $v_{k+1}(t):=\frac {2(\W L_{k+1}(t))^2}{1+t}$, both of which have degree $2k+1$. Then
$$
\delta_i=\delta_iv_i(\beta_i)=\sum_{j=0}^{k+1}\delta_jv_i(\beta_j)=
\int_{-1}^{1}v_i(t)\ \! d\mu_n(t)>0,\ \ \ i=0,\ldots,k+1,
$$
completing the proof.
\end{proof}

\subsection{Positive-definite signed measures and the corresponding quadratures}\label{PDmeasures}

We recall the following definition (see \cite[Definition 3.4]{CK} and \cite[Definition 2.1]{BDHSS-DCC19}).

\begin{definition}\label{nu_pos}
A signed Borel measure $\nu$ on $\mathbb{R}$ for which all polynomials are integrable is called {\em positive definite up to degree $m$}  if for all real polynomials $p\not\equiv 0$ of degree at most $m$, we have $ \int p(t)^2\, d\nu(t) >0$.
\end{definition} 

The following lemma investigates the positive definiteness of the signed measure 
\begin{equation}\label{nu_s'}
d\nu_s (t):= \gamma_{n} (s^2-t^2)(1-t^2)^{(n-3)/2}\, dt
\end{equation}
in terms of $s$.

\begin{lemma} \label{SignedNu_s}  Let $k\geq 1$ and let $\alpha_{k+1}< s\leq 1$, where $\alpha_{k+1}$ is the largest root of the $(k+1)$-th Gegenbauer polynomial $P_{k+1}^{(n)}(t)$. Then the signed measure $\nu_s$ is positive definite up to degree $k-1$.
\end{lemma}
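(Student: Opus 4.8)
The plan is to show that $\int p(t)^2\,d\nu_s(t)>0$ for every real polynomial $p\not\equiv 0$ with $\deg p\le k-1$, where $d\nu_s(t)=\gamma_n(s^2-t^2)(1-t^2)^{(n-3)/2}\,dt$. The natural tool is the Gauss–Gegenbauer quadrature of Lemma~\ref{LemmaGJQ}: its nodes are the zeros $\alpha_1<\dots<\alpha_{k+1}$ of $P_{k+1}^{(n)}$, its weights $\rho_j$ are all positive, and it is exact on $\Pi_{2k+1}$. First I would fix $p\in\Pi_{k-1}$, $p\not\equiv 0$, and observe that the integrand $(s^2-t^2)p(t)^2$ is a polynomial of degree at most $2(k-1)+2=2k\le 2k+1$, so the quadrature applies exactly:
\begin{equation*}
\int_{-1}^1 p(t)^2\,d\nu_s(t)=\gamma_n\int_{-1}^1 (s^2-t^2)p(t)^2(1-t^2)^{(n-3)/2}\,dt=\sum_{j=1}^{k+1}\rho_j\,(s^2-\alpha_j^2)\,p(\alpha_j)^2.
\end{equation*}

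Next I would argue that every term on the right is nonnegative and at least one is strictly positive. Since $\rho_j>0$ and $p(\alpha_j)^2\ge 0$, it suffices to check the sign of $s^2-\alpha_j^2$. By the symmetry of the $\alpha_j$ about $0$ we have $\{|\alpha_1|,\dots,|\alpha_{k+1}|\}\subset[0,\alpha_{k+1}]$, and the hypothesis $\alpha_{k+1}<s\le 1$ gives $|\alpha_j|\le\alpha_{k+1}<s$, hence $s^2-\alpha_j^2>0$ for every $j=1,\dots,k+1$. Therefore each summand $\rho_j(s^2-\alpha_j^2)p(\alpha_j)^2$ is nonnegative. Finally, a nonzero polynomial $p$ of degree at most $k-1$ has at most $k-1$ zeros, so it cannot vanish at all $k+1$ distinct nodes $\alpha_1,\dots,\alpha_{k+1}$; pick an index $j_0$ with $p(\alpha_{j_0})\neq 0$, and then the $j_0$-th term is strictly positive. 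Hence the whole sum is strictly positive, which is exactly the claim that $\nu_s$ is positive definite up to degree $k-1$.

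There is no serious obstacle here; the argument is essentially a packaging of the exactness and positivity statements already recorded in Lemma~\ref{LemmaGJQ}. The one point that warrants a moment's care is the degree bookkeeping: one must note that $(s^2-t^2)p(t)^2$ stays within $\Pi_{2k+1}$ — indeed within $\Pi_{2k}$ — precisely because $\deg p\le k-1$, which is why the lemma is stated for degree $k-1$ rather than $k$ (at degree $k$ the integrand would have degree $2k+2$ and exactness would fail). The only hypothesis actually used is $\alpha_{k+1}<s$ (together with $s\le 1$ to keep $\nu_s$ a legitimate measure on $[-1,1]$), and the strict inequality is what delivers strict positivity of $s^2-\alpha_j^2$ even at the extreme nodes $\alpha_1,\alpha_{k+1}$.
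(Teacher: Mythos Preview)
Your proof is correct and follows essentially the same approach as the paper: apply the Gauss--Gegenbauer quadrature of Lemma~\ref{LemmaGJQ} to the degree-$2k$ polynomial $(s^2-t^2)p(t)^2$, then use positivity of the weights, the strict inequality $|\alpha_j|<s$, and the fact that a nonzero $p\in\Pi_{k-1}$ cannot vanish at all $k+1$ nodes. Your additional remark on why the statement is for degree $k-1$ rather than $k$ is a nice clarification but is not needed for the argument itself.
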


\begin{proof} Let $ q(t)\in \Pi_{k-1}$, $q(t) \not\equiv 0$. Using the fact that Gauss-Gegenbauer quadrature \eqref{GJQ} is exact on $\Pi_{2k+1}$ (see Lemma \ref{LemmaGJQ}) and the fact that the polynomial $(q(t))^2(s^2-t^2)$ has degree at most $2k$, we obtain
$$ \gamma_{n} \int_{-1}^1 (q(t))^2 (s^2-t^2)(1-t^2)^{(n-3)/2}\, dt=\sum_{i=1}^{k+1} \rho_i (q(\alpha_i))^2(s^2-\alpha_i^2)>0,$$
where the positivity follows because $s> \alpha_{k+1}>\ldots>\alpha_1>-s$ (recall that the zeros $\{\alpha_i\}$ of $P_{k+1}^{(n)}$ are symmetric about the origin), $\rho_i>0$, $i=1,\ldots,k+1$ (see Lemma \ref{LemmaGJQ}), and $q$ cannot vanish for all $\alpha_i$ because its degree is at most $k-1$. This proves the lemma.
\end{proof}

\begin{remark}
    Note that if $s=\alpha_{k+1}$, then the conclusion in the display above fails for the polynomial $q(t)=P_{k+1}^{(n)}(t)/[(t-\alpha_1)(t-\alpha_{k+1})]$ of degree $k-1$.
\end{remark}

Assume again that $\alpha_{k+1}<s\leq 1$. From Definition \ref{nu_pos} and Lemma \ref{SignedNu_s} we have that 
$$ \langle f,g \rangle_{\nu_s} := \int_{-1}^1 f(t) g(t) \, d \nu_s (t) $$
defines an inner product on $\Pi_{k-1}$. Therefore, 
given a basis $\{p_j\}_{j=0}^{k}$ of $\Pi_{k}$, the Gram-Schmidt orthogonalization procedure \cite[Section 6.4]{LayLA}
\begin{equation} \label{sOrtho}
q_{0,s}(t)=p_0(t),\quad q_{j,s}(t)=p_j(t)-\sum_{i=0}^{j-1}\frac{\langle p_j,q_{i,s}\rangle_{\nu_s}}{\langle q_{i,s},q_{i,s} \rangle_{\nu_s}} \cdot q_{i,s}(t),\quad j=1,\dots,k,
\end{equation}
yields an orthogonal basis $\{q_{0,s},\ldots,q_{k,s}\}$ with respect to $\nu_s$. Note that since $\nu_s$ is an even measure, the polynomials $q_{j,s}$ will be even for even $j$ and odd for odd $j$, an easy consequence of \eqref{sOrtho} if we select $p_j(t)=t^j$, $j=0,\ldots,k$. 

\begin{lemma}\label{int_s}
Let $k\geq 1$ and $\alpha_{k+1}<s\leq 1$. Then the polynomial $q_{k,s}$ has $k$ real simple zeros $-s<\lambda_1<\ldots<\lambda_k<s$ that are symmetric about $0$.
\end{lemma}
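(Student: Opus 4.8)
The plan is to mimic the classical proof that orthogonal polynomials with respect to a positive measure have simple real zeros in the interior of the support, but being careful because $\nu_s$ is only a \emph{signed} measure that is positive definite up to degree $k-1$ rather than up to degree $k$. First I would record the key property of $q_{k,s}$: by construction in \eqref{sOrtho}, $q_{k,s}$ is orthogonal (with respect to $\langle\cdot,\cdot\rangle_{\nu_s}$) to the entire space $\Pi_{k-1}$, so $\int_{-1}^1 q_{k,s}(t)\,p(t)\,d\nu_s(t)=0$ for every $p\in\Pi_{k-1}$. Since $q_{k,s}$ is monic of degree exactly $k$ (it equals $t^k$ plus lower-order terms when we take $p_j(t)=t^j$), it is not the zero polynomial, and since $\nu_s$ is positive definite up to degree $k-1$ we have $\langle q_{k,s},q_{k,s}\rangle_{\nu_s}>0$ — wait, that product involves degree $2k$, so I cannot conclude positivity that way; instead I note that if $q_{k,s}\equiv 0$ that would contradict monicity. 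The genuine input I need is only that $q_{k,s}\not\equiv 0$ and $q_{k,s}\perp_{\nu_s}\Pi_{k-1}$.

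Next, the sign-change argument. Let $\lambda_1<\dots<\lambda_m$ be the distinct points of $(-s,s)$ at which $q_{k,s}$ changes sign, i.e. the roots of odd multiplicity lying in the open interval $(-s,s)$; a priori $0\le m\le k$. I want to show $m=k$. Suppose $m<k$, hence $m\le k-1$. Consider the polynomial $\pi(t):=\prod_{j=1}^{m}(t-\lambda_j)$ (with $\pi\equiv 1$ if $m=0$), which has degree $m\le k-1$. On the support $[-1,1]$ of the density $(s^2-t^2)(1-t^2)^{(n-3)/2}$, the weight $\gamma_n(s^2-t^2)(1-t^2)^{(n-3)/2}$ is \emph{nonnegative} on $[-s,s]$ and \emph{nonpositive} on $[-1,-s]\cup[s,1]$; this is where $\alpha_{k+1}<s\le 1$ matters, together with the fact that $q_{k,s}$ has degree $k$ so it can have at most $k\le$ (enough) real zeros — more precisely I will argue that $q_{k,s}$ has no sign change in $[-1,-s)\cup(s,1]$ by a counting argument: $q_{k,s}$ already has $m$ sign changes in $(-s,s)$; if it had an additional sign change outside $[-s,s]$, then, together with the behavior at $\pm s$... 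Here I need to be a bit more careful, and I think the cleanest route is the following. The product $q_{k,s}(t)\,\pi(t)$ vanishes to even order (or does not change sign) at every point of $[-1,1]$: inside $(-s,s)$ this is by design of $\pi$, and outside $[-s,s]$ I claim $q_{k,s}$ itself does not change sign. The claim follows because $q_{k,s}$ has exactly $k$ roots counted with multiplicity; at least $m$ of them (counted once each, the odd-multiplicity ones) lie in $(-s,s)$, leaving at most $k-m$ further roots, and since $q_{k,s}$ is even or odd (as noted after \eqref{sOrtho}) its roots come in $\pm$ pairs plus possibly $0$, so sign changes outside $[-s,s]$ would have to come in pairs $\{-t_0,t_0\}$ with $t_0\in(s,1)$; but then on the set $[-1,1]$ the function $q_{k,s}(t)\pi(t)(s^2-t^2)$ would be... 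Actually the slick version: choose $\pi$ of degree $\le k-1$ so that $q_{k,s}(t)\,\pi(t)$ has constant sign on $[-s,s]$, namely $\pi(t)=\prod_{j=1}^m (t-\lambda_j)$ where now I take $\lambda_j$ to be \emph{all} sign changes of $q_{k,s}$ in the open interval $(-1,1)$ that lie in $(-s,s)$ \emph{and} also include pairs from outside if needed so that $q_{k,s}\pi\ge 0$ on $[-s,s]$ while $q_{k,s}\pi\le 0$ on $[-1,-s]\cup[s,1]$; since $\deg(q_{k,s}\pi(s^2-t^2))$ is then at most $k+(k-1)+2=2k+1$ only when... this overshoots.

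Let me state the obstacle plainly and the fix: the \textbf{main obstacle} is that the natural test polynomial $q_{k,s}(t)\pi(t)$ has degree up to $2k-1$, so the integral $\int q_{k,s}\pi\,d\nu_s$ involves degree $2k+1$, which is exactly at the boundary of where Gauss--Gegenbauer exactness \eqref{GJQ} (valid on $\Pi_{2k+1}$) still applies — and crucially it is \emph{beyond} the degree $k-1$ positive-definiteness of $\nu_s$. So I cannot invoke positive definiteness directly; instead I will compute $\int_{-1}^1 q_{k,s}(t)\pi(t)\,d\nu_s(t)=\sum_{i=1}^{k+1}\rho_i\, q_{k,s}(\alpha_i)\pi(\alpha_i)(s^2-\alpha_i^2)$ via Lemma \ref{LemmaGJQ}, and observe every summand has the same sign (since $\rho_i>0$, $s>\alpha_{k+1}$ forces $s^2-\alpha_i^2>0$ for all $i$, and $q_{k,s}\pi$ has constant sign on $[-s,s]\supset\{\alpha_i\}$), with at least one summand strictly nonzero; hence the integral is nonzero. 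But orthogonality $q_{k,s}\perp_{\nu_s}\Pi_{k-1}$ forces it to be zero when $\deg\pi=m\le k-1$ — a contradiction. Therefore $m=k$, so $q_{k,s}$ has $k$ sign changes in $(-1,1)$; since $\deg q_{k,s}=k$ these account for all roots, each is simple, and they lie in $(-s,s)$ because any root in $[-1,-s]\cup[s,1]$ would be an extra root beyond the count. Finally, symmetry about $0$ follows from the parity of $q_{k,s}$ (even for even $k$, odd for odd $k$), established right after \eqref{sOrtho}. I still need to double-check the one delicate point — that the $\lambda_j$'s, chosen as the odd-multiplicity roots of $q_{k,s}$ in $(-s,s)$, indeed lie in $(-s,s)$ and not in $[-1,-s]\cup[s,1]$ so that $\pi(\alpha_i)\ne 0$ issues don't arise and $\deg\pi\le k-1$; this is automatic once we know $m<k$ is the assumption, since then $\pi$ has degree $m<k\le$ and all its roots are by definition inside $(-s,s)$.
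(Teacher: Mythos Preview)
Your final argument (the paragraph beginning ``Let me state the obstacle plainly and the fix'') is correct and takes a genuinely different route from the paper. The paper argues in two steps: first it shows $q_{k,s}$ has $k$ distinct real zeros by writing $q_{k,s}v$ (with $v$ the product over odd-multiplicity real roots) as a nonzero constant times a sum of squares of polynomials of degree at most $k-1$ and invoking positive definiteness of $\nu_s$ (Lemma~\ref{SignedNu_s}); second, assuming a root $\alpha$ with $\alpha^2\ge s^2$, it derives a contradiction via the test polynomial $w(t)=q_{k,s}(t)/(t^2-\alpha^2)$ and a direct sign computation. You bypass the positive-definiteness layer and go straight to the Gauss--Gegenbauer quadrature \eqref{GJQ}: because the nodes $\alpha_i$ already lie in $(-s,s)$, your single sign-change argument delivers both the count \emph{and} the location of the zeros at once. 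Your approach is more economical; the paper's is cleaner to read because each step is shorter and self-contained.

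One step needs a sentence you did not supply: the claim ``with at least one summand strictly nonzero''. If every term $\rho_i\,q_{k,s}(\alpha_i)\pi(\alpha_i)(s^2-\alpha_i^2)$ vanished, then (since $\rho_i>0$ and $s^2-\alpha_i^2>0$) the polynomial $q_{k,s}\pi$ would vanish at all $k+1$ points $\alpha_1,\dots,\alpha_{k+1}\in(-s,s)$; but $q_{k,s}\pi$ has constant sign on $[-s,s]$, so each such zero would have even multiplicity, forcing $\deg(q_{k,s}\pi)\ge 2(k+1)$, while in fact $\deg(q_{k,s}\pi)=k+m\le 2k-1$. That closes the gap. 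You should also excise the exploratory paragraphs preceding the final argument; they add nothing and obscure what is, in the end, a clean proof.
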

\begin{proof}
First, assume to the contrary that $q_{k,s}$ has less than $k$ distinct real zeros. Let $B$ be the set of zeros of $q_{k,s}$ having an odd multiplicity.
Denote $v(t):=\prod\limits_{z\in B}(t-z)$. If $B=\emptyset$, we let $v(t):=1$. Then $v$ has degree at most $k-1$. Furthermore, the polynomial $q_{k,s}(t)v(t)$ can only have real zeros of even multiplicities and may contain irreducible quadratic factors. Then its degree is at most $2k-2$. Since every monic irreducible quadratic factor is the sum of a square of a degree one polynomial and a positive constant, the polynomial $q_{k,s}(t)v(t)$ equals a non-zero constant times the sum of squares of polynomials of degrees at most $k-1$. By Lemma \ref{SignedNu_s} and the orthogonality property of $q_{k,s}$, we have
$$
0=\int\limits_{-1}^{1}q_{k,s}(t)v(t)\ \! d\nu_s(t)\neq 0.
$$
This contradiction shows that $q_{k,s}$ has exactly $k$ distinct real zeros. Then they all must be simple. Since $q_{k,s}$ is even or odd, its zeros are symmetric about the origin.

Assume now in the contrary that $q_{k,s}$ has a root $\alpha$ such that $\alpha^2\geq s^2$. Then $-\alpha$ is also a root of $q_{k,s}$ and $k\geq 2$.
The polynomial $w(t):=\frac {q_{k,s}(t)}{t^2-\alpha^2}$ has degree $k-2$ and using the orthogonality property of $q_{k,s}$ we obtain that
\begin{equation*}
\begin{split}
0&=\int\limits_{-1}^{1}q_{k,s}(t)w(t)\ \! d\nu_s(t)=\int\limits_{-1}^{1}(t^2-\alpha^2)(w(t))^2\ \! d\nu_s(t)\\
&=\int\limits_{-1}^{1}(t^2-s^2)(w(t))^2\ \! d\nu_s(t)+(s^2-\alpha^2)\int\limits_{-1}^{1}(w(t))^2\ \! d\nu_s(t)\\
&=-\int\limits_{-1}^{1}(t^2-s^2)^2(w(t))^2\ \! d\mu_n(t)+(s^2-\alpha^2)\int\limits_{-1}^{1}(w(t))^2\ \! d\nu_s(t)<0,
\end{split}
\end{equation*}
where the negativity of the sum of integrals follows from the fact that $\mu_n$ is a positive measure, the signed measure $\nu_s$ is positive definite up to degree $k-1$, and the contrary assumption that $\alpha^2\geq s^2$.
This contradiction shows that all zeros of $q_{k,s}$ are contained in the interval $(-s,s)$.
\end{proof}

Next, denote $\lambda_0:=-s$ and $\lambda_{k+1}:=s$ and let the weights $\{ \theta_i \}_{i=0}^{k+1}$ be defined by 
\begin{equation}\label{weights_beta''} 
\theta_i=\gamma_{n}\int_{-1}^1 \ell_i (t) (1-t^2)^{(n-3)/2}\, dt ,
\end{equation} 
where $\ell_i(t)$ are the fundamental Lagrange polynomials associated with the nodes $\{ \lambda_i \}_{i=0}^{k+1}$. Note that for $s=1$, the density of $\nu_s$ equals modulo a constant factor the Gegenbauer weight corresponding to the dimension $n+2$. Then the polynomial $q_{k,1}$ equals $P_k^{(n+2)}$ modulo a constant factor. Therefore, in the case $s=1$, we have $\lambda_i=\beta_i$ and $\theta_i=\delta_i$, $i=0,\ldots,k+1$.

\begin{lemma}\label{signed}
For any $n\geq 2$, $k\geq 1$, and $\alpha_{k+1}<s\leq 1$, the quadrature formula
\begin{equation} \label{GJs}
\gamma_{n} \int_{-1}^1 f(t) (1-t^2)^{(n-3)/2} \, dt\approx\sum_{i=0}^{k+1} \theta_i f(\lambda_i) 
\end{equation}
is exact on $\Pi_{2k+1}$. Furthermore, $\theta_i>0$ and $\theta_i=\theta_{k+1-i}$, $i=0,\ldots,k+1$.
\end{lemma}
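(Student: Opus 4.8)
The plan is to follow the proof of Lemma~\ref{LemmaGJQ1} almost verbatim, replacing the Gegenbauer weight in the auxiliary factorization by the density of the signed measure $\nu_s$, and to be more careful only in the positivity step, where the number of nodes exceeds the count appropriate to the claimed degree of exactness. For exactness on $\Pi_{2k+1}$: given $f\in\Pi_{2k+1}$, I would divide by $(s^2-t^2)q_{k,s}(t)$, which has degree $k+2$, to write $f(t)=(s^2-t^2)q_{k,s}(t)q(t)+r(t)$ with $q\in\Pi_{k-1}$ and $r\in\Pi_{k+1}$. Integrating against $\mu_n$ and using $d\nu_s(t)=\gamma_n(s^2-t^2)(1-t^2)^{(n-3)/2}\,dt$, the first term equals $\int_{-1}^1 q_{k,s}(t)q(t)\,d\nu_s(t)$, which vanishes because $q_{k,s}$ is orthogonal to $\Pi_{k-1}$ with respect to $\nu_s$ by the Gram--Schmidt construction \eqref{sOrtho}; hence $\gamma_n\int_{-1}^1 f(t)(1-t^2)^{(n-3)/2}\,dt=\gamma_n\int_{-1}^1 r(t)(1-t^2)^{(n-3)/2}\,dt$. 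On the quadrature side, $(s^2-\lambda_i^2)q_{k,s}(\lambda_i)=0$ for every $i\in\{0,\dots,k+1\}$ (the factor $s^2-t^2$ kills $i=0$ and $i=k+1$, and the zeros of $q_{k,s}$ from Lemma~\ref{int_s} kill $i=1,\dots,k$), so $f(\lambda_i)=r(\lambda_i)$ and $\sum_{i=0}^{k+1}\theta_i f(\lambda_i)=\sum_{i=0}^{k+1}\theta_i r(\lambda_i)$. Since \eqref{GJs} is interpolatory on the $k+2$ nodes $\{\lambda_i\}$, it is exact on $\Pi_{k+1}\ni r$, so both sides equal $\gamma_n\int_{-1}^1 r(t)(1-t^2)^{(n-3)/2}\,dt$, and therefore they agree for $f$ as well.

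For the symmetry $\theta_i=\theta_{k+1-i}$, I would note that the nodes $\{\lambda_i\}_{i=0}^{k+1}$ are symmetric about the origin ($\lambda_0=-s$, $\lambda_{k+1}=s$, and $\lambda_1,\dots,\lambda_k$ by Lemma~\ref{int_s}), so their fundamental Lagrange polynomials satisfy $\ell_i(-t)=\ell_{k+1-i}(t)$; the substitution $t\mapsto -t$ in \eqref{weights_beta''} then yields the claim because $(1-t^2)^{(n-3)/2}$ is even.

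For positivity, at an interior node $\lambda_i$, $1\le i\le k$, I would test the quadrature against $V_i(t):=(s^2-t^2)\prod_{1\le j\le k,\,j\ne i}(t-\lambda_j)^2\in\Pi_{2k}$, which vanishes at every $\lambda_j$ with $j\ne i$ and is strictly positive at $\lambda_i$ (since $|\lambda_i|<s$ by Lemma~\ref{int_s}). Exactness then gives
$$\theta_i\,V_i(\lambda_i)=\sum_{j=0}^{k+1}\theta_j V_i(\lambda_j)=\gamma_n\int_{-1}^1 V_i(t)(1-t^2)^{(n-3)/2}\,dt=\int_{-1}^1\Bigl(\textstyle\prod_{j\ne i}(t-\lambda_j)\Bigr)^2 d\nu_s(t)>0,$$
the final inequality following from Lemma~\ref{SignedNu_s}, since the factor being squared has degree $k-1$; hence $\theta_i>0$. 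At the endpoints the naive squared-Lagrange test polynomial has degree $2k+2$, which exceeds the exactness range, so instead I would use $V_0(t):=(s-t)\,q_{k,s}(t)^2\in\Pi_{2k+1}$, which vanishes at $\lambda_1,\dots,\lambda_{k+1}$ and is positive at $\lambda_0=-s$. Exactness gives $\theta_0\,V_0(\lambda_0)=\gamma_n\int_{-1}^1(s-t)q_{k,s}(t)^2(1-t^2)^{(n-3)/2}\,dt$; since $q_{k,s}$ has a definite parity, $q_{k,s}^2$ is even, so the integral of the odd part vanishes and there remains $s\,\gamma_n\int_{-1}^1 q_{k,s}(t)^2(1-t^2)^{(n-3)/2}\,dt=s\int_{-1}^1 q_{k,s}(t)^2\,d\mu_n(t)>0$, the positivity now coming from $\mu_n$ being a genuine positive (nonzero) measure. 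Thus $\theta_0>0$, and $\theta_{k+1}=\theta_0>0$ by the symmetry just shown (or, directly, via $V_{k+1}(t):=(s+t)q_{k,s}(t)^2$).

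The one genuine obstacle is precisely the positivity at the two endpoint nodes. Because $\{\lambda_i\}_{i=0}^{k+1}$ has $k+2$ points while \eqref{GJs} is only claimed exact up to degree $2k+1$, the standard ``plug in the square of a fundamental Lagrange polynomial'' argument overshoots the exactness degree, and when $s<1$ the available degree-$(2k+1)$ substitutes are not pointwise nonnegative on $[-1,1]$, so one cannot deduce positivity of the integral from pointwise nonnegativity of the integrand. The resolution is to exploit the definite parity of $q_{k,s}$ to discard the odd part of the integrand and fall back on positivity of $\mu_n$ rather than on positive definiteness of $\nu_s$; for the interior nodes no such issue arises because the relevant test polynomial has degree only $2k$ and the conclusion follows directly from Lemma~\ref{SignedNu_s}.
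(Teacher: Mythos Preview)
Your proof is correct and follows essentially the same route as the paper: the exactness argument via division by $(s^2-t^2)q_{k,s}(t)$, the symmetry via $\ell_i(-t)=\ell_{k+1-i}(t)$, and the interior positivity via $(s^2-t^2)[q_{k,s}(t)/(t-\lambda_i)]^2$ are identical to the paper's. For the endpoints the paper uses the slightly simpler test $f=q_{k,s}^2\in\Pi_{2k}$ directly (which yields $\theta_0 q_{k,s}^2(-s)+\theta_{k+1}q_{k,s}^2(s)=2\theta_0 q_{k,s}^2(-s)$ by symmetry), whereas your $(s-t)q_{k,s}^2$ plus the parity cancellation achieves the same end; both rely, as you correctly note, on positivity of $\mu_n$ rather than of $\nu_s$.
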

\begin{proof}
Let $f$ be any polynomial of degree at most $2k+1$. There exist polynomials $u$ and $v$ of degrees
at most $k-1$ and $k+1$, respectively, such that
$$f(t)=(s^2-t^2)q_{k,s}(t)u(t)+v(t).$$
The $\nu_s$-orthogonality of $q_{k,s}$ to $u(t)$ yields that
$$\int_{-1}^1 f(t) d \mu_n(t)=\int_{-1}^1 v(t) d \mu_n(t).$$
As $(s^2-t^2)q_{k,s}(t)$ vanishes at each node of quadrature formula \eqref{GJs} we have that 
$$\sum_{i=0}^{k+1} \theta_i f(\lambda_i) =\sum_{i=0}^{k+1} \theta_i v(\lambda_i) .$$
Finally, since the quadrature formula \eqref{GJs} is interpolatory, it is exact on $\Pi_{k+1}$ and, hence, for $v$. We then infer that it holds for $f$ as well.

We next establish the positivity of the weights $\theta_i$. Recall that the roots of $q_{k,s}$ are symmetric about the origin (with $0$ being one of the roots when $k$ is odd). This easily implies the symmetry of the weights $\theta_i$, namely that $\theta_j=\theta_{k+1-j}$, $j=0,1,\dots,k+1$. Using $f=q_{k,s}^2$ in \eqref{GJs} we obtain
\begin{eqnarray*}
0<\gamma_{n} \int_{-1}^1 q_{k,s}^2(t) (1-t^2)^{\frac{n-3}{2}} \, dt=\theta_0q_{k,s}^2(-s)+\theta_{k+1}q_{k,s}^2(s)=2\theta_0q_{k,s}^2(-s),
\end{eqnarray*}
which yields that $\theta_0=\theta_{k+1}>0$. Using $f(t)=(s^2-t^2)[q_{k,s}(t)/(t-\lambda_i)]^2$, $i=1,\dots,k$,  we conclude utilizing Lemma \ref{SignedNu_s} and quadrature \eqref{GJs} that
\begin{eqnarray*}
0<\int_{-1}^1 \left[ \frac{q_{k,s}(t)}{t-\lambda_i}\right]^2\, d\nu_s(t)=\int\limits_{-1}^{1}f(t)\ \! d\mu_n(t)=\sum\limits_{j=0}^{k+1}\theta_jf(\lambda_j)=\theta_if(\lambda_i)=\theta_i(s^2-\lambda_i^2)[q_{k,s}^\prime(\lambda_i)]^2,
\end{eqnarray*}
$i=1,\ldots,k$.
Hence, the positivity of the weights is established. 
\end{proof}

\section{Main Results}\label{main}

Ambrus and Nietert \cite {AmbNie2019} proved the min-max polarization optimality of $(1,1)$-designs for the potential $h(t)=t^2$ (among arbitrary codes on $\mathbb{S}^{n-1}$ of the same cardinality). The energy minimizing property of $(1,1)$-designs with respect to this potential $h$ was established by Benedetto and Fickus \cite{BF03}. For the potential $h(t)=t^{2k}$, the energy minimizing property of $(k,k)$-designs was established in the works by Sidel'nikov \cite{Sid1974}, Venkov \cite{Ven2001}, and Welch \cite{Wel2006}. In \cite{BGMPV1,BGMPV2} Bilyk et. al. studied energy minimization problems for the more general {\em $p$-frame} potentials $h(t)=|t|^p$, $p>0$.

In \cite{BDHSS-22a,BDHSS-22b} universal polarization bounds for spherical designs were derived (see also \cite{B,Bor-new,Bor-stiff}). In the current article we adopt our previous approach with a view toward applications to tight frames and $p$-frame potentials. Motivated by these applications we restrict ourselves to even potentials $h$ and relax the differentiability conditions imposed on $h$ in \cite{BDHSS-22a}. To state our universal bounds we shall set for the remainder of the paper \begin{equation}\label{f_g}
    h(t):=g(t^2), \quad g:[0,1]\to (-\infty,\infty],
\end{equation} 
where $g$ is finite and continuous on $[0,1)$, continuous in the extended sense at $t=1$, and $(k+1)$-differentiable in $(0,1)$ for some $k\in \mathbb{N}$. Examples of such potentials include the $p$-frame potential $h(t)=|t|^p$, the symmetric Riesz potential $h(t)=(1+t)^{-p}+(1-t)^{-p}$, the arcsine potential $h(t)=1/\sqrt{1-t^2}$, and the symmetric Gaussian potential $h(t)=\cosh(t)$.

The universal bounds we obtain here hold for each $(k,k)$-design (and thus are universal in the Levenshtein sense \cite{Lev98}), and are valid for a large class of potential functions (and hence in the Cohn-Kumar sense \cite{CK}). Moreover, the bounds are computed at certain nodes and with 
certain weights that are independent of the potential. 
In the special case of $h(t)=t^{2k}$, we shall prove 
that the $(k,k)$-designs are optimal (for the polarization problems under consideration) in the class of all spherical codes of the same cardinality (see Theorem \ref{minmax-maxmin-extract}).

We next state the main results of this article. Given the set $-1<\alpha_1<\ldots<\alpha_{k+1}<1$ of zeros of the Gegenbauer polynomial $P_{k+1}^{(n)}$, denote by $H_{2k}$ the unique polynomial of degree at most $2k$ such that for every $i=1,\ldots,k+1$, we have $H_{2k}(\alpha_i)=h(\alpha_i)$ and, if $\alpha_i\neq 0$, also $H_{2k}'(\alpha_i)=h'(\alpha_i)$. Since $h$ is even, these interpolation conditions are symmetric about $0$. Therefore, $H_{2k}$ is even. If $k$ is odd, then $H_{2k}$ interpolates $h$ at each $\alpha_i$ to degree two and is called the Hermite interpolation polynomial for $h$ at the system of nodes $\{\alpha_1,\ldots,\alpha_{k+1}\}$, see, e.g., \cite[p. 330]{Sze1975} or \cite[Section 2.2]{IsaKel1994}. Since it satisfies $2k+2$ interpolation conditions, one must look for it in $\Pi_{2k+1}$. However, since $H_{2k}$ is even, it must be in $\Pi_{2k}$. If $k$ is even, then $H_{2k}$ interpolates $h$ at $t=0$ to degree one. However, if $h$ is differentiable at $t=0$, then $h'(0)=H'_{2k}(0)=0$ because both $h$ and $H_{2k}$ are even; that is $H_{2k}$ will be a Hermite again. Therefore, we will call $H_{2k}$ the Hermite interpolation polynomial for $k$ of both parities, keeping in mind that when $k$ is even and $h'(0)$ does not exist, $H_{2k}$ interpolates $h$ at $t=0$ only to degree one. 

Given the set $-1=\beta_0<\beta_1<\ldots<\beta_k<\beta_{k+1}=1$, where $\beta_i$, $i=1,\ldots,k$, are zeros of the Gegenbauer polynomial $P_{k}^{(n+2)}$, denote by $\W H_{2k}$ the unique polynomial of degree at most $2k$ such that for every $i=0,\ldots,k+1$, we have $\W H_{2k}(\beta_i)=h(\beta_i)$ and, if $1\leq i\leq k$ and $\beta_i\neq 0$, also $\W H_{2k}'(\beta_i)=h'(\beta_i)$. Since $h$ is even and the interpolation conditions are again symmetric about the origin, $\W H_{2k}$ is even and it is sufficient to search for it in $\Pi_{2k}$. Like in the previous paragraph, we will still call $\W H_{2k}$ the Hermite interpolation polynomial, keeping in mind that $\W H_{2k}$ interpolates $h$ to degree one at $t=1$, $t=-1$, and, if $k$ is odd and $h'(0)$ does not exist, at $t=0$.  
\begin{theorem}[Lower bounds]\label{PULB-extract}
Assume there exists a spherical $(k,k)$-design $C$ of cardinality $N$ on $\mathbb{S}^{n-1}$ and 
that the potential $h$ is as in \eqref{f_g}. Then

\begin{itemize}
    \item[(i)] if $g^{(k+1)}(u)\geq 0$ on $(0,1)$, the following bounds hold
\begin{equation}\label{PolarizationULB-e1}
m_{N}^h \geq m^h_{N}(k)\geq m^h(C) \geq N \cdot \sum_{i=1}^{k+1} \rho_i h(\alpha_i), 
\end{equation}
where $\{ \alpha_i \}_{i=1}^{k+1}$ are the roots of the Gegenbauer polynomial $P_{k+1}^{(n)}$ and 
$\{ \rho_i \}_{i=1}^{k+1}$ are associated weights {\rm (}defined in \eqref{rho_weights'}{\rm )}. Moreover, the bound \eqref{PolarizationULB-e1} 
is the best that can be obtained via polynomials from the set ${\mathcal L}(k;h)$ {\rm (}see \eqref{L(k,h)}{\rm )} and is provided by the unique maximizer $H_{2k}(t)$ of the linear program \eqref{PULB_LP_Program}; that is the Hermite interpolation polynomial to $h$ at the nodes $\{\alpha_i \}_{i=1}^{k+1}$ as described in the paragraphs above.
 \item[(ii)] if $g^{(k+1)}(u)\leq 0$ on $(0,1)$, the following bounds hold
\begin{equation}\label{PolarizationULB-e2}
m_{N}^h \geq m^h_{N}(k)\geq m^h(C) \geq N \cdot \sum_{i=0}^{k+1} \delta_i h(\beta_i), 
\end{equation}
where $\beta_0=-1$, $\beta_{k+1}=1$, and $\{ \beta_i \}_{i=1}^{k}$ are the roots of the Gegenbauer polynomial $P_{k}^{(n+2)}$ and 
$\{ \delta_i \}_{i=0}^{k+1}$ are associated weights {\rm (}defined in \eqref{delti-1}-\eqref{delti-3}{\rm )}. Moreover, the bound \eqref{PolarizationULB-e2} 
is the best that can be obtained via polynomials from the set ${\mathcal L}(k;h)$ {\rm (}see \eqref{L(k,h)}{\rm )} and is provided by the unique maximizer $\W H_{2k}(t)$ of the linear program \eqref{PULB_LP_Program}; that is the Hermite interpolation polynomial to $h$ at the nodes $\{ \beta_i \}_{i=0}^{k+1}$ as described in the paragraph above.
\end{itemize}
\end{theorem}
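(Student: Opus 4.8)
I would run the Delsarte--Yudin scheme prepared in the previous sections. For part~(i) the plan is: (a) show the Hermite interpolant $H_{2k}$ is feasible for the linear program \eqref{PULB_LP_Program}, i.e.\ $H_{2k}\in\mathcal L(k;h)$; (b) compute its constant Gegenbauer coefficient $(H_{2k})_0$ via Gauss--Gegenbauer quadrature; (c) show $(H_{2k})_0$ is the optimal value of \eqref{PULB_LP_Program}; (d) deduce uniqueness of the maximizer and assemble the chain \eqref{PolarizationULB-e1} from Theorem~\ref{PULB-k-k-general}. Part~(ii) is the same with the quadrature of Lemma~\ref{LemmaGJQ1} replacing that of Lemma~\ref{LemmaGJQ} and $\W H_{2k}$ replacing $H_{2k}$.

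For step~(a) I would write $H_{2k}(t)=p(t^2)$ with $p\in\Pi_k$; since $h'(t)=2t\,g'(t^2)$, the interpolation data defining $H_{2k}$ at the symmetric nodes $\pm\alpha_i$ become Hermite data for $p$ at the points $\alpha_i^2$ — value and derivative at each nonzero $\alpha_i^2$, value only at $0$ (which occurs exactly when $k$ is even). Counting multiplicities gives a unisolvent $(k+1)$-point Hermite problem in $\Pi_k$ (the only $p\in\Pi_k$ vanishing to these orders is $0$), which simultaneously reproves that $H_{2k}$ is well defined. The Hermite remainder formula then yields
\[
g(u)-p(u)=\frac{g^{(k+1)}(\xi_u)}{(k+1)!}\,\omega(u),\qquad u\in(0,1),\ \xi_u\in(0,1),
\]
where $\omega$ is the product of the squares $(u-\alpha_i^2)^2$ over the positive nodes, multiplied by an extra factor $u$ when $k$ is even; in all cases $\omega\ge0$ on $[0,1]$, so $g^{(k+1)}\ge0$ on $(0,1)$ forces $p\le g$, hence $H_{2k}\le h$, on $[-1,1]$ after taking limits at $u=0$ and $u=1$. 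For part~(ii) the nodes are $\beta_0=-1,\beta_1,\dots,\beta_k,\beta_{k+1}=1$ with value-only conditions at $\pm1$ (and at $0$ when $k$ is odd), so the remainder polynomial picks up the factor $(u-1)$ and becomes $\le0$ on $[0,1]$; this is exactly why the hypothesis is reversed to $g^{(k+1)}\le0$.

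Steps~(b)--(d) are short. Since $H_{2k}\in\Pi_{2k}\subset\Pi_{2k+1}$ and the quadrature of Lemma~\ref{LemmaGJQ} is exact on $\Pi_{2k+1}$, the interpolation conditions give $(H_{2k})_0=\int_{-1}^1 H_{2k}\,d\mu_n=\sum_{i=1}^{k+1}\rho_i H_{2k}(\alpha_i)=\sum_{i=1}^{k+1}\rho_i h(\alpha_i)$. For an arbitrary $f\in\mathcal L(k;h)$ (which is automatically even) the same exactness, $\rho_i>0$, and $f(\alpha_i)\le h(\alpha_i)$ give $f_0=\sum_i\rho_i f(\alpha_i)\le(H_{2k})_0$, so $H_{2k}$ is a maximizer of \eqref{PULB_LP_Program} with value $(H_{2k})_0$; feeding this into \eqref{pulb-1} and \eqref{k-k-constantL} of Theorem~\ref{PULB-k-k-general} produces \eqref{PolarizationULB-e1}. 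For uniqueness, if $f\in\mathcal L(k;h)$ attains $f_0=(H_{2k})_0$ then $f(\alpha_i)=h(\alpha_i)$ for all $i$ by positivity of the $\rho_i$; at each interior node $\alpha_i\neq0$ the nonnegative $C^1$ function $h-f$ attains its minimum value $0$, so $f'(\alpha_i)=h'(\alpha_i)$, and at $\alpha_i=0$ evenness gives $f'(0)=0=H_{2k}'(0)$. Hence $f$ satisfies the unisolvent system defining $H_{2k}$ and $f=H_{2k}$. Part~(ii) is verbatim, using Lemma~\ref{LemmaGJQ1}, the weights $\delta_i>0$, the nodes $\beta_i$, and $\W H_{2k}$, yielding \eqref{PolarizationULB-e2}.

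The main obstacle I anticipate is step~(a), specifically the behavior at the endpoints and at the origin. The pointwise Hermite remainder formula as usually stated needs $g$ to be $C^{k}$ on a closed interval containing all the nodes and the evaluation point, while here $g$ is only assumed $(k+1)$-differentiable on the \emph{open} interval $(0,1)$; moreover $h$ may equal $+\infty$ at $\pm1$ and need not be differentiable at $0$, even though the even polynomials $H_{2k}$ (and any competing $f$) automatically are. I would circumvent this by a perturbation argument — displace the offending interpolation node slightly into $(0,1)$, apply the remainder formula on a compact subinterval, and pass to the limit — combined with a careful bookkeeping of the node multiplicities according to the parity of $k$, so as to confirm that $\omega$ (respectively the part~(ii) remainder polynomial) really is the claimed sign-definite product on $[0,1]$. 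Everything else reduces to the exactness and positivity statements of Lemmas~\ref{LemmaGJQ} and \ref{LemmaGJQ1} and to Theorem~\ref{PULB-k-k-general}.
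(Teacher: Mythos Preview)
Your proposal is correct and follows essentially the same approach as the paper: pass to the variable $u=t^2$, identify $H_{2k}(t)=G_k(t^2)$ (resp.\ $\W H_{2k}(t)=G_k(t^2)$) with $G_k$ the appropriate Hermite interpolant to $g$, use the Hermite remainder formula to verify feasibility, then use exactness and positivity of the Gauss--Gegenbauer (resp.\ Lobatto-type) quadrature together with Theorem~\ref{PULB-k-k-general} for the bound, optimality, and uniqueness. Two minor remarks: the paper handles the endpoint issue not by perturbation but simply by applying the remainder formula on $(0,1)$ and invoking continuity at $u=0,1$; and in part~(ii) it first proves $g(1)<\infty$ from $g^{(k+1)}\le 0$ via the Taylor integral remainder, a small step you did not mention but will need so that $\W H_{2k}(\pm1)=h(\pm1)$ is finite.
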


\begin{remark}
    Note that the condition $g^{(k+1)}(u)\leq 0$ on $(0,1)$ in Theorem \ref{PULB-extract}(ii) along with the continuity of $g$ in the extended sense on $[0,1]$ implies that $g(1)$ is finite (see the proof).
\end{remark}

As an immediate consequence of Theorem \ref{PULB-extract}(ii),  
by considering the potential $-h$, we obtain Theorem \ref{PUUB-extract} giving a universal upper bound for $M_N^h (k)$. An alternative proof of Theorem \ref{PUUB-extract} based on Theorem \ref{PUUB-extract-s}, is provided in Section \ref{section-UUB}. 

\begin{theorem}[Upper bounds -- finite case]\label{PUUB-extract} 
Assume there exists a spherical $(k,k)$-design $C$
of cardinality $N$ on $\mathbb{S}^{n-1}$ and 
that the potential $h$ is as in \eqref{f_g}. Then if $g^{(k+1)}(u)\geq 0$ on $(0,1)$ and $g(1)$ is finite, the following bounds hold
\begin{equation}\label{PolarizationUUB-e}
M^h_{N}\leq M^h_{N}(k) \leq M^h(C)\leq N \cdot \sum_{i=0}^{k+1} \delta_i h(\beta_i), 
\end{equation}
where the parameters $\{ \beta_i \}_{i=0}^{k+1}$ and 
$\{ \delta_i \}_{i=0}^{k+1}$ are as in Theorem \ref{PULB-extract}(ii). 
\end{theorem}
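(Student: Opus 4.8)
The plan is to deduce Theorem~\ref{PUUB-extract} from Theorem~\ref{PULB-extract}(ii) applied to the reflected potential $\widetilde h := -h$, exactly as announced in the text. First I would write $\widetilde h(t) = -g(t^2) = \widetilde g(t^2)$ with $\widetilde g := -g$, and verify that $\widetilde g$ meets the standing hypotheses \eqref{f_g}: since the present theorem assumes $g(1)$ finite, $\widetilde g$ is finite and continuous on all of $[0,1]$ (in particular continuous in the extended sense at $t=1$), it is $(k+1)$-differentiable on $(0,1)$, and $\widetilde g^{(k+1)} = -g^{(k+1)} \le 0$ on $(0,1)$. Thus $\widetilde h$ satisfies precisely the hypotheses of Theorem~\ref{PULB-extract}(ii).

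Next I would record the elementary sign-reversal identities linking the min-max quantities of $h$ to the max-min quantities of $\widetilde h$. For the given $(k,k)$-design $C$ one has $m^{\widetilde h}(C) = \inf_{x\in\mathbb S^{n-1}} U^{\widetilde h}(x,C) = -\sup_{x\in\mathbb S^{n-1}} U^{h}(x,C) = -M^h(C)$; taking the supremum over all $N$-point $(k,k)$-designs gives $m^{\widetilde h}_N(k) = -M^h_N(k)$, and over all $N$-point codes gives $m^{\widetilde h}_N = -M^h_N$. Moreover $\widetilde h(\beta_i) = -h(\beta_i)$ for each $i$, while the nodes $\beta_0,\dots,\beta_{k+1}$ (the zeros of $P_k^{(n+2)}$ together with $\pm1$) and the associated interpolatory weights $\delta_0,\dots,\delta_{k+1}$ from \eqref{delti-1}--\eqref{delti-3} depend only on $n$ and $k$, hence are unaffected by replacing $h$ with $\widetilde h$.

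With these identities in place, Theorem~\ref{PULB-extract}(ii) applied to $\widetilde h$ and the design $C$ gives
\[
m^{\widetilde h}_N \;\ge\; m^{\widetilde h}_N(k) \;\ge\; m^{\widetilde h}(C) \;\ge\; N\sum_{i=0}^{k+1}\delta_i\,\widetilde h(\beta_i) = -\,N\sum_{i=0}^{k+1}\delta_i\,h(\beta_i).
\]
Substituting $m^{\widetilde h}_N = -M^h_N$, $m^{\widetilde h}_N(k) = -M^h_N(k)$, $m^{\widetilde h}(C) = -M^h(C)$ and multiplying the whole chain by $-1$ (which reverses every inequality) yields exactly \eqref{PolarizationUUB-e}.

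The argument has no genuine obstacle; the only point requiring care is the bookkeeping that keeps $-h$ inside the admissible potential class. The hypothesis $g(1)<\infty$ is precisely what is needed for $\widetilde g = -g$ to stay finite on $[0,1]$, so that the max-min problem for $\widetilde h$ is non-degenerate and Theorem~\ref{PULB-extract}(ii) applies verbatim; this explains why finiteness of $g(1)$ must be imposed here even though it is automatic under the sign condition of Theorem~\ref{PULB-extract}(ii). I would also note that a self-contained proof not using the reflection trick is available through Theorem~\ref{PUUB-extract-s} and will be given in Section~\ref{section-UUB}.
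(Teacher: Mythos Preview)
Your proposal is correct and follows exactly the approach the paper announces: deduce Theorem~\ref{PUUB-extract} from Theorem~\ref{PULB-extract}(ii) applied to $-h$, with the finiteness of $g(1)$ ensuring $-g$ stays in the admissible potential class. The paper also supplies the alternative direct argument via Theorem~\ref{PUUB-extract-s} (with $s=1$) in Section~\ref{section-UUB}, which you correctly anticipate.
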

The solution to linear program \eqref{PUUB_LP_Program} in Theorem \ref{PUUB-extract} follows from the solution to linear program \eqref{PULB_LP_Program} in Theorem \ref{PULB-extract} (ii).

When $g(1)=\infty$ we utilize the notion of a positive definiteness up to a certain degree for signed measures (see Definition \ref{nu_pos}) to find a universal upper bound on $m_N^h (k)$.

\begin{theorem}[Upper bounds -- infinite case]\label{PUUB-extract-s} 
Assume there exists a spherical $(k,k)$-design $C$
of cardinality $N$ on $\mathbb{S}^{n-1}$, and let $R_{k,N}<s<1$, where $R_{k,N}$ is defined in \eqref{R_{k,N}}.
Suppose the potential $h$ is as in \eqref{f_g} and that $g^{(k+1)}(u)\geq 0$ on $(0,s^2)$.
Then the following bound holds:
\begin{equation}\label{PolarizationUUB-e3}
m^h(C) \leq m^h_{N}(k)\leq  N \cdot \sum_{i=0}^{k+1} \theta_i h(\lambda_i), 
\end{equation}
where 
$-\lambda_0=\lambda_{k+1}=s$
and 
$\{\lambda_i\}_{i=1}^{k}$ 
are the zeros of the $k$-th degree polynomial $q_{k,s}$, orthogonal to all lower degree polynomials with respect to the signed measure $\nu_s$ {\rm (}see \eqref{nu_s'}{\rm )}, and $\{ \theta_i \}_{i=0}^{k+1}$ are associated weights {\rm (}defined in \eqref{weights_beta''}{\rm{})}.
\end{theorem}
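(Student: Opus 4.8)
\textbf{Proof proposal for Theorem \ref{PUUB-extract-s}.}
The plan is to apply the general upper bound of Theorem \ref{PUUB-k-k-general}~ii) with the specific value of $s$ in the hypothesis, and then exhibit a polynomial $q\in\mathcal U(k,s;h)$ whose value $q_0N$ equals the claimed right-hand side $N\sum_{i=0}^{k+1}\theta_i h(\lambda_i)$. Since $R_{k,N}<s<1$, the hypothesis $s>R_{k,N}$ of Theorem \ref{PUUB-k-k-general}~ii) is satisfied, and also $s<1$ guarantees $\mathcal U(k,s;h)\neq\emptyset$; moreover $s>\alpha_{k+1}$ (because $\alpha_{k+1}<1$ is itself a candidate covering-radius value, so $R_{k,N}\ge\alpha_{k+1}$, or more simply because the relevant quadrature machinery of Lemma \ref{signed} requires exactly $\alpha_{k+1}<s\le 1$ and this is where the nodes $\lambda_i$ and weights $\theta_i$ come from). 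Thus the nodes $-s=\lambda_0<\lambda_1<\cdots<\lambda_k<\lambda_{k+1}=s$ from Lemma \ref{int_s} and the positive weights $\theta_i$ from Lemma \ref{signed} are well-defined, and the quadrature \eqref{GJs} is exact on $\Pi_{2k+1}$.

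The candidate polynomial is the Hermite-type interpolant $q$ of degree at most $2k$ to $h$ at the nodes $\{\lambda_i\}_{i=0}^{k+1}$: namely $q(\lambda_i)=h(\lambda_i)$ for all $i$, with the additional matching $q'(\lambda_i)=h'(\lambda_i)$ at those interior nodes $\lambda_i\neq 0$ (so that $q$ is even, exactly as in the discussion preceding Theorem \ref{PULB-extract}, and lies in $\Pi_{2k}$). First I would verify $q\in\mathcal U(k,s;h)$, i.e. $q(t)\ge h(t)$ on $[-s,s]$. For this write $h(t)=g(t^2)$ and consider the error $q(t)-h(t)$. Because $q$ interpolates $h$ with matched derivatives at the $k+1$ nodes $\lambda_1<\cdots<\lambda_k$ (counting $\pm$ symmetric pairs and $0$ appropriately) plus the two endpoints $\pm s$ to first order, a standard divided-difference/Rolle argument gives a representation of the form $q(t)-h(t)=c\,(s^2-t^2)\,q_{k,s}(t)^2\cdot[\text{something with a sign controlled by }g^{(k+1)}]$ on $(-s,s)$; the key is that the sign of the $(k+1)$st derivative of $g$ on $(0,s^2)$, together with $(s^2-t^2)\ge 0$ there, forces $q-h\ge 0$. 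I would make this precise by passing to the variable $u=t^2$ and using the classical Hermite remainder formula for the interpolant of $g$ on $[0,s^2]$ at the nodes $\lambda_i^2$ (with $u=s^2$ a simple node), where $g^{(k+1)}\ge 0$ delivers the nonnegativity — this is the analogue of the argument establishing the lower bound in Theorem \ref{PULB-extract}(i) and I expect it to be the main technical obstacle, chiefly in bookkeeping the even/odd parity, the double versus simple nodes, and the change of variables carefully.

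Granting $q\in\mathcal U(k,s;h)$, the second step is the evaluation of $q_0$. By Lemma \ref{k-k-lemma}~v) (or directly since $q\in\Pi_{2k}$ is even), $q_0=\int_{-1}^1 q(t)\,d\mu_n(t)$. Since $q\in\Pi_{2k}\subset\Pi_{2k+1}$ and quadrature \eqref{GJs} is exact on $\Pi_{2k+1}$ (Lemma \ref{signed}), we get
\begin{equation*}
q_0=\int_{-1}^1 q(t)\,d\mu_n(t)=\sum_{i=0}^{k+1}\theta_i\,q(\lambda_i)=\sum_{i=0}^{k+1}\theta_i\,h(\lambda_i),
\end{equation*}
using $q(\lambda_i)=h(\lambda_i)$ at every node. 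Combining with Theorem \ref{PUUB-k-k-general}~ii) yields $m^h_N(k)\le N\cdot\inf\{q_0:q\in\mathcal U(k,s;h)\}\le N\sum_{i=0}^{k+1}\theta_i h(\lambda_i)$, and the inequality $m^h(C)\le m^h_N(k)$ is immediate from the definition \eqref{PolMinDes} once $C$ is the given $(k,k)$-design of cardinality $N$ (note $m^h(C)$ here should be read as $\le m^h_N(k)$ for the supremizing design, or, if $C$ is fixed, one invokes Theorem \ref{PUUB-k-k-general}~i) directly with the same $q$, using $s>R_{k,N}\ge r(C)$). This completes the proof modulo the remainder-formula estimate, which is the only place real work is needed; the rest is assembling already-established quadrature facts.
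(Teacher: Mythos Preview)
Your proposal is correct and follows essentially the same approach as the paper: construct the even Hermite-type interpolant to $h$ at the nodes $\{\lambda_i\}_{i=0}^{k+1}$ by passing to $u=t^2$ and interpolating $g$ on $[0,s^2]$, use the Hermite remainder formula together with $g^{(k+1)}\ge 0$ on $(0,s^2)$ to show the interpolant lies in $\mathcal U(k,s;h)$, and then evaluate its zeroth Gegenbauer coefficient via the quadrature of Lemma~\ref{signed}. The paper carries out exactly this, writing the interpolant explicitly as $H_{2k}^s(t):=G_k(t^2)$ and then unwrapping Theorem~\ref{PUUB-k-k-general} directly in display \eqref{42a} rather than quoting it; the only additional content in the paper is the optimality and uniqueness of $H_{2k}^s$ for the linear program \eqref{PUUB_LP_Program}, which is not part of the statement of Theorem~\ref{PUUB-extract-s} itself. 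One small clarification: your justification that $s>\alpha_{k+1}$ should rest squarely on Corollary~\ref{FL-type} (giving $\alpha_{k+1}\le r_{k,N}\le R_{k,N}<s$), not on the fact that Lemma~\ref{signed} ``requires'' it, which would be circular.
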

In the proof of Theorem \ref{PUUB-extract-s}, the unique solution to linear program \eqref{PUUB_LP_Program} is also found.
Observe that in Theorem \ref{PUUB-extract-s}, the value $h(1)$ can be infinite. If $h(1)< \infty$,  then Theorem \ref{PUUB-extract} may be derived from the proof of Theorem \ref{PUUB-extract-s} by letting $s\to 1^-$ (see Remark \ref{Rem_1}).

In an important special case, we prove the optimality of $(k,k)$-designs whenever they exist both for min-max and max-min polarization. 

\begin{theorem}\label{minmax-maxmin-extract}
Let $n \geq 2$, $k,N\geq 1$, and $h(t)=t^{2k}$. Then for every $N$-point code $C\subset \mathbb{S}^{n-1}$,
\begin{equation}\label{m1-m2}
m{}^{h}(C)\leq c_{2k}N \leq M{}^{h}(C),
\end{equation}
where $c_\ell$ is defined by \eqref{cell}. 
Each inequality in \eqref{m1-m2} becomes an equality if and only if 
$C$ is an $N$-point spherical $(k,k)$-design on $\mathbb{S}^{n-1}$. Moreover, if an $N$-point spherical $(k,k)$-design exists on $\mathbb{S}^{n-1}$, we have
$$ m^h_{N}=M^h_{N}=c_{2k}N $$
for $h(t)=t^{2k}$.
\end{theorem}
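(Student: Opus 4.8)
The plan is to reduce the whole statement to one mean-value computation for $U^h(\cdot,C)$ over the sphere and then read off the equality cases from Lemma~\ref{k-k-lemma}. Since $h(t)=t^{2k}$ is continuous and finite on $[-1,1]$, for every $N$-point code $C=\{x_1,\dots,x_N\}\subset\mathbb{S}^{n-1}$ the potential $U^h(\cdot,C)$ is continuous on the compact set $\mathbb{S}^{n-1}$ and hence attains its infimum $m^h(C)$ and supremum $M^h(C)$. Integrating termwise and using the definition \eqref{cell} of $c_{2k}=\int_{\mathbb{S}^{n-1}}(z\cdot y)^{2k}\,d\sigma_n(y)$ (equivalently, the Funk--Hecke formula \eqref{FunkHecke} with $f(t)=t^{2k}$), one gets
\begin{equation*}
\int_{\mathbb{S}^{n-1}}U^h(x,C)\,d\sigma_n(x)=\sum_{i=1}^{N}\int_{\mathbb{S}^{n-1}}(x\cdot x_i)^{2k}\,d\sigma_n(x)=c_{2k}N .
\end{equation*}
Because $\sigma_n$ is a probability measure, the average of a continuous function lies between its infimum and its supremum, so $m^h(C)\le c_{2k}N\le M^h(C)$, which is \eqref{m1-m2}.

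Next I would handle the equality cases. If a continuous function on $\mathbb{S}^{n-1}$ equals its $\sigma_n$-average at a point where it attains its infimum, it must be identically equal to that average; otherwise it would be strictly larger than the average on a nonempty open set (necessarily of positive $\sigma_n$-measure), forcing the average up. The same applies at a maximum. Hence each inequality in \eqref{m1-m2} is an equality if and only if $U^h(\cdot,C)\equiv c_{2k}N$ on $\mathbb{S}^{n-1}$. Now $U^h(x,C)=\sum_{i=1}^{N}(x\cdot x_i)^{2k}$, so this constancy is exactly the Waring-type identity \eqref{Waring-on-sphere} with $\ell=2k$; since $Q_{2k}$ in \eqref{Waring} is homogeneous of degree $2k$, the identity on $\mathbb{S}^{n-1}$ is equivalent to its validity on all of $\mathbb{R}^n$, i.e.\ to condition (iii) of Lemma~\ref{k-k-lemma}, and hence to $C$ being a spherical $(k,k)$-design. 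This settles the equality characterization for both inequalities.

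Finally, for the last assertion I would use a $(k,k)$-design as a test configuration. Assuming an $N$-point spherical $(k,k)$-design $C_0\subset\mathbb{S}^{n-1}$ exists, the previous paragraph gives $U^h(\cdot,C_0)\equiv c_{2k}N$, so $m^h(C_0)=M^h(C_0)=c_{2k}N$. Combining this with the universal inequalities just proved: $m^h_N=\sup_{|C|=N}m^h(C)\le c_{2k}N$ because every $m^h(C)\le c_{2k}N$, while $m^h_N\ge m^h(C_0)=c_{2k}N$; hence $m^h_N=c_{2k}N$. Symmetrically, $M^h_N=\inf_{|C|=N}M^h(C)\ge c_{2k}N$ and $M^h_N\le M^h(C_0)=c_{2k}N$, so $M^h_N=c_{2k}N$.

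The argument is almost entirely soft, so I do not expect a serious obstacle; the only points requiring a little care are the equality analysis (a continuous function meeting its average at an extremum is constant) and the bookkeeping identifying ``$\sum_i(x\cdot x_i)^{2k}$ constant on $\mathbb{S}^{n-1}$'' with the defining Waring identity of a $(k,k)$-design via homogeneity. (One could alternatively obtain the lower inequality for $(k,k)$-designs from Theorem~\ref{PULB-extract}(i) with $g(u)=u^k$, for which $g^{(k+1)}\equiv 0$ and $\sum_{i=1}^{k+1}\rho_i\alpha_i^{2k}=c_{2k}$ by exactness of the Gauss--Gegenbauer quadrature on $\Pi_{2k+1}$; but that route only reaches the design-restricted quantities and still needs a design to exist, so the averaging argument above is the most economical.)
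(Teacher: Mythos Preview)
Your proof is correct and follows essentially the same route as the paper: the paper factors the argument through Lemma~\ref{av} (the same averaging computation via Funk--Hecke), Proposition~\ref{Pi=kk} (constancy of $U_{2k}$ characterizes $(k,k)$-designs via Lemma~\ref{k-k-lemma}~iii)), and then Theorems~\ref{maxmin} and~\ref{minmax} (max $\ge$ average $\ge$ min with the same continuity analysis of the equality cases). The only difference is organizational---you present the argument in one piece, while the paper splits it into auxiliary statements---but the mathematical content is identical.
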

Theorem \ref{minmax-maxmin-extract} follows immediately by combining Theorems \ref{maxmin} and \ref{minmax} established in Section \ref{kk-optimality}.

\section{Universal lower bounds -- proofs, examples, and applications} \label{Ubounds}

In this section, we will derive polarization lower bounds (PULB) for $(k,k)$-designs. In doing so, we first show that the interpolating polynomial $H_{2k}$ (or $\W H_{2k}$) stays below the potential function $h$. Then we prove the required lower bound for the potential of the code $C$ using the fact that it is a $(k,k)$-design and the exactness property of the corresponding quadrature. Using these tools, we then show the optimality of the polynomial $H_{2k}$ ($\W H_{2k}$) for linear program \eqref{PULB_LP_Program} and its uniqueness as the maximizer.

Recall that $h$ satisfies \eqref{f_g}, namely $h(t)=g(t^2)$, $g:[0,1]\to (-\infty,\infty]$, where $g$ is continuous on $[0,1)$, continuous in the extended sense at $t=1$, and $(k+1)$-differentiable in $(0,1)$. Considering $h$ in terms of $g$ allows us to utilize the properties of $(k,k)$-designs and relax the differentiability requirements for $h$.


\subsection{Proof of Theorem \ref{PULB-extract}(i)}
 
We shall start by proving the last inequality in \eqref{PolarizationULB-e1},  
since the existence of a spherical $(k,k)$-design $C$ of cardinality $N$ on $\mathbb S^{n-1}$ implies
that $m_N^h \geq m_N^h(k)\geq m^h(C)$. 

Recall that the zeros $-1<\alpha_1<\ldots<\alpha_{k+1}<1$ of the Gegenbauer polynomial $P_{k+1}^{(n)}$ are symmetric about $0$ (see subsection \ref{sGJQ}). Our considerations will depend on the parity of $k+1$. Therefore, we let $k=2\ell-1+\mu$, $\ell\in \mathbb Z$ and $\mu\in \{0,1\}$, and define numbers $0\leq u_{1-\mu}<\ldots<u_\ell<1$ by $$-\alpha_i=\alpha_{k+2-i}=\sqrt{u_{\ell+1-i}}, \ \ \ i=1,\ldots,\ell+\mu.$$

Let $G_{k}(u)$ be the unique Hermite interpolation polynomial for $g$ at the nodes 
$\{ u_i \}_{i=1-\mu}^{\ell}$, where when $\mu=1$ we apply Lagrange interpolation at the node $u_0=0$ (recall that $g$ need not be differentiable at $0$), namely $G_k\in \Pi_k$, $G_{k}(u_i)=g(u_i)$, $i=1-\mu,\dots,\ell$ 
and 
$G_{k}^\prime(u_i)=g^{\prime} (u_i)$, $i=1,\dots,\ell$. The Hermite interpolation error formula (see \cite[Section 2.2]{IsaKel1994} or \cite[Theorem 3.5.1]{Dav1975}) implies that for any $u\in [0,1)$, there exists $\xi=\xi(u)\in (0,1)$ such that
\begin{equation}\label{HermiteError}
g(u)-  G_{k}(u) = \frac{g^{(k+1)}(\xi)}{(k+1)!}u^\mu(u-u_1)^2 \dots (u-u_{\ell})^2\geq 0 .\end{equation}
Therefore, $G_k (u)\leq g(u)$ on $[0,1]$ (recall the extended continuity of $g$ at $1$).

Next, we show that $G_k(t^2)=H_{2k}(t)$. Indeed, for every $i=1,\dots,\ell+\mu$ we have 
$$H_{2k}(\alpha_i)=h(\alpha_i)=g(\alpha_i^2)=g(u_{\ell+1-i})=G_k(u_{\ell+1-i})=G_k(\alpha_i^2)$$
and for every $i=1,\dots,\ell$ we have
$$ H_{2k}'(\alpha_i)=h'(\alpha_i)=2\alpha_i g'(\alpha_i^2)=-2\sqrt{u_{\ell+1-i}}g'(u_{\ell+1-i})=-2\sqrt{u_{\ell+1-i}} G_k'(u_{\ell+1-i}) =2\alpha_i G_k'(\alpha_i^2).$$
Since both $H_{2k}(t)(t)$ and $G_k(t^2)$ are even, we conclude that $G_{k}(t^2)$ (which is in $\Pi_{2k}$) interpolates $h$ to degree two at every non-zero $\alpha_i$, $i=1,\ldots,k+1$. If $\mu=1$ it interpolates $h$ at the origin and if $h$ is differentiable at $0$, since it is an even function we have that $h^\prime (0)=0$ and so $\frac{d}{dt}G_k(t^2)|_{t=0}=h^\prime (0)$ as well. These interpolation conditions together with the fact that $G_k(t^2)$ has degree at most $2k$ determine $G_k(t^2)$ uniquely; that is, $G_k(t^2)=H_{2k}(t)$.

We now clearly have $H_{2k}(t)\leq h(t)$ on $[-1,1]$, or $H_{2k}\in {\mathcal L}(k;h)$. If $C$ is any $(k,k)$-design on $\mathbb S^{n-1}$ of cardinality $N$, for any $x\in \mathbb S^{n-1}$, we derive that
\begin{equation}\label {26a}
\begin {split}
U^h(x,C)&=\sum\limits_{i=1}^{N}h(x\cdot x_i)\geq \sum\limits_{i=1}^{N}H_{2k}(x\cdot x_i) \\
  &= N(H_{2k})_0= N\gamma_{n} \int_{-1}^1 H_{2k}(t) (1-t^2)^{(n-3)/2}\, dt \\
                           &= N \cdot \sum_{i=1}^{k+1} \rho_i H_{2k}(\alpha_i) = N \cdot \sum_{i=1}^{k+1} \rho_i h(\alpha_i),
\end {split}
\end{equation}
where we used the fact that $h\geq H_{2k}$, Lemma \ref{k-k-lemma} v), the exactness of quadrature \eqref{GJQ} on polynomials of degree up to $2k+1$ (see Lemma \ref{LemmaGJQ}), and the fact that $H_{2k}$ interpolates $h$ at the nodes $\alpha_i$. The inequality $U^h(x,C)\geq N(H_{2k})_0$, $x\in \mathbb S^{n-1}$, also follows directly from Theorem~\ref{PULB-k-k-general}. Then
$$
m_N^h \geq m_N^h(k) \geq  m^h(C)=\inf\limits_{x\in \mathbb S^{n-1}}U^h(x,C)\geq N \cdot \sum_{i=1}^{k+1} \rho_i h(\alpha_i),
$$
which proves \eqref{PolarizationULB-e1}. 

To show the optimality of $H_{2k}$ for the linear program \eqref{PULB_LP_Program}, let $f$ be an arbitrary (even) polynomial of degree at most $2k$ in the class ${\mathcal L}(k;h)$. Using Lemma \ref{LemmaGJQ} and \eqref{26a} we have 
\begin{equation}\label{LPH2k} Nf_0=N\gamma_{n} \int_{-1}^1 f(t) (1-t^2)^{(n-3)/2}\, dt = N \cdot \sum_{i=1}^{k+1} \rho_i f(\alpha_i)\leq N \cdot \sum_{i=1}^{k+1} \rho_i h(\alpha_i)=N(H_{2k})_0,
\end{equation}
which implies $f_0\leq (H_{2k})_0$; that is, $H_{2k}$ is optimal.
To show the uniqueness of the maximizer,
we recall that $\rho_i>0$, $i=1,\ldots,k+1$, see Lemma \ref{LemmaGJQ}.
For equality to hold in \eqref{LPH2k}, we need $f(\alpha_i)=h(\alpha_i)$ for $i=1,\dots,k+1$. Let $g_k \in \Pi_k$ be such that $g_k (t^2)=f(t)$. Then $g_k (u_i)=g(u_i)$, $i=1-\mu,\dots,\ell$, which along with $g_k (u)\leq g(u)$ on $[0,1]$ implies that $g'_k (u_i)=g' (u_i )$, $i=1,\dots,\ell$. These interpolation conditions and the fact that $g_k\in \Pi_k$ determine $g_k$ uniquely; that is, $g_k=G_k$, and hence $f=H_{2k}$, which completes the proof.
\hfill $\Box$

\subsection{Proof of Theorem \ref{PULB-extract}(ii)} In this case we follow similar approach with some modification. We first show that $g(1)$ is finite. Indeed, let $T_k(t;g)$ denote the degree $k$ Taylor polynomial of $g$ at $t=1/2$. Then the integral error formula and the assumption that $g^{(k+1)}\leq 0$ on $(0,1)$ yield
$$g(t)=T_k (t;g)+\int_{1/2}^t g^{(k+1)}(u)\frac{(t-u)^k}{k!}\, du\leq \max_{u\in [1/2,1]} T_k(u;g), \ \frac{1}{2}<t<1; $$
that is, $g(t)$ is bounded above on $(1/2,1)$. Then the claim that $g(1)<\infty$ follows from the continuity of $g$ in the extended sense at $t=1$.

Let $k=2\ell+\mu$, $\mu\in \{0,1\}$, $\ell\in \mathbb Z$. 
Let $G_k$ be the unique polynomial of degree at most $k$ that interpolates $g$ and $g^\prime$ at the nodes $u_i=(\beta_{k+1-i})^2$, $i=1,\dots,\ell$, and that interpolates $g$ at $u=1$, and when $\mu=1$ also at $u=0$.

A Hermite interpolation error formula, similar to \eqref{HermiteError}, yields that for any $u\in(0,1)$, there exists $\xi=\xi(u) \in (0,1)$, such that  (see, e.g., \cite[Section 2.2]{IsaKel1994} or \cite[Theorem 3.5.1]{Dav1975})
$$
g(u)-  G_{k}(u) = \frac{g^{(k+1)}(\xi)}{(k+1)!}u^\mu(u-u_1)^2 \dots (u-u_{\ell})^2(u-1).$$
Because $g^{(k+1)}\leq 0$, we conclude that $G_k(u)\leq g(u)$ for all $u\in (0,1)$ and from the continuity of $g$ on $[0,1]$ we obtain that $G_k(u)\leq g(u)$, $u\in [0,1]$.

We derive in a similar manner to the proof of (i) that $G_k(t^2)$ coincides with the polynomial $\W H_{2k}(t)$, which is defined before Theorem \ref{PULB-extract} and interpolates $h$ and $h^\prime$ at any $\beta_i \in (-1,0)\cup (0,1)$, interpolates $h$ at $t=-1$, at $t=1$, and when $\mu=1$ at $t=0$. Then $\W H_{2k}(t)\leq h(t)$, $t\in[-1,1]$; that is, $\W H_{2k}\in \mathcal{L}(k;h)$. For any $(k,k)$-design $C$ on $\mathbb S^{n-1}$ of cardinality $N$, taking also into account Lemma \ref{k-k-lemma} v) and Lemma \ref{LemmaGJQ1} we derive for any $x\in \mathbb S^{n-1}$ that
\begin{equation}\label {26a-2}
\begin{split}
U^h(x,C)&=\sum_{i=1}^{N}h(x\cdot x_i)\geq \sum_{i=1}^{N}\W {H}_{2k}(x\cdot x_i)=N(\W {H}_{2k})_0=N\gamma_{n} \int_{-1}^1 \W {H}_{2k}(t) (1-t^2)^{(n-3)/2}\, dt \\
                           &= N \cdot \sum_{i=0}^{k+1} \delta_i \W {H}_{2k}(\beta_i) = N \cdot \sum_{i=0}^{k+1} \delta_i h(\beta_i),
\end{split}
\end{equation}
which completes the proof of \eqref{PolarizationULB-e2}. The inequality $U^h(x,C)\geq N(\W H_{2k})_0$, $x\in \mathbb S^{n-1}$, also follows directly from Theorem \ref{PULB-k-k-general}.

The optimality of $\W H_{2k}$ and its uniqueness as a maximizer are shown analogously to the proof of (i).\hfill $\Box$

\begin{remark}
    From \eqref {26a}, we also obtain that
$$M_N^h(k) \geq N \cdot \sum_{i=1}^{k+1} \rho_i h(\alpha_i)
$$
with parameters as in Theorem \ref{PULB-extract}(i). 
Similarly, from \eqref{26a-2} we conclude that
$$
M_{N}^h(k) \geq N \cdot \sum_{i=0}^{k+1} \delta_i h(\beta_i),
$$
where the parameters are as in Theorem \ref{PULB-extract}(ii). 
\end{remark}

\subsection{Examples and applications} 

\begin{example}[\bf Stiff codes]
A spherical code is {\em stiff} (cf. \cite{Bor-stiff}) if it is a spherical $(2k-1)$-design contained in $k$ parallel hyperplanes, i.e. there is a point on $\mathbb{S}^{n-1}$ whose inner products with points of the code take exactly $k$ distinct values. The 
antipodal stiff codes give rise to a number of codes attaining the bound \eqref{PolarizationULB-e1}. Indeed, assume that $C \subset \mathbb{S}^{n-1}$
is an antipodal stiff code. Then the code formed by choosing a point from each pair of antipodal points of $C$ is a spherical $(k-1,k-1)$-design (cf. \cite{BZ4})
which attains \eqref{PolarizationULB-e1}. 
Note that there are $2^{|C|/2}$ such codes (some of which will be isometric); for example the cube 
provides $2^4=16$ optimal $(1,1)$-designs on $\mathbb{S}^2$ and the $24$-cell provides $2^{12}=4096$ optimal $(2,2)$-designs on $\mathbb{S}^{3}$. 
\end{example}

\begin{example}[\bf Unit-norm tight frames]
    Recall that a $(1,1)$-spherical design $C$ is called a unit-norm tight frame. Let us first consider $p$-frame potentials $h(t)=\varepsilon|t|^p$, $\varepsilon\in \{\pm 1\}$, $p\geq 2$. The corresponding potential functions on $[0,1]$ are $g(u)=\varepsilon u^{p/2}$. When $\varepsilon=1$, $g ''(u)=(p/2)(p/2-1)u^{p/2-2}\geq 0$ on $(0,1)$ and bound \eqref{PolarizationULB-e1} holds with $k=1$. In this case, $P_2^{(n)}(t)=\frac {nt^2-1}{n-1}$, $\alpha_1=-\frac {1}{\sqrt{n}}$, $\alpha_2=\frac {1}{\sqrt{n}}$, and by symmetry of quadrature \eqref{GJQ} and its exactness on constants, we have $\rho_1=\rho_2=\frac {1}{2}$. Consequently,
    $$\sum_{y \in C} |x\cdot y|^p \geq \frac{N}{n^{p/2}}.$$

    When $\epsilon=-1$, then the bound \eqref{PolarizationULB-e2} holds as $g''(u)\leq 0$. We may compute directly (recall that $P_1^{(n+2)}(t)=t$) that $\delta_0=\delta_2=1/(2n)$ and $\delta_1=(n-1)/n$, and that $\beta_0=-1$, $\beta_1=0$, and $\beta_2=1$. Thus, 
     $$\sum_{y\in C} -|x\cdot y|^p \geq -\frac{N}{n}.$$
     Observe, we obtain the same bound if we apply Theorem \ref{PUUB-extract} for the potential $h(t)=|t|^p$. 
     
     Considering the case $0<p<2$ in a similar way we conclude the following corollary for tight frames. 
     \begin{corollary}
     For any unit-norm tight frame $C\subset \mathbb{S}^{n-1}$, the following estimates on its $p$-frame potential hold
    \begin{equation}\label{W21}
       \frac{|C|}{n}\geq \sum_{y\in C} |x\cdot y|^p \geq \frac{|C|}{n^{p/2}}, \ \ \ p\geq 2,
   \end{equation}
   and 
   \begin{equation}\label{W21ab}
     \frac{|C|}{n^{p/2}}  \geq \sum_{y\in C} |x\cdot y|^p \geq \frac{|C|}{n}, \ \ \ 0<p<2.
   \end{equation}
     \end{corollary}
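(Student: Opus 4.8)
The plan is to derive the Corollary as a direct application of Theorem~\ref{PULB-extract}, splitting into the cases $p\geq 2$ and $0<p<2$ and keeping careful track of the sign of the relevant derivative. Write $h(t)=|t|^p=g(t^2)$ with $g(u)=u^{p/2}$ on $[0,1]$; then $g$ is continuous on $[0,1]$ (indeed on $[0,\infty)$) and infinitely differentiable on $(0,1)$, so it is of the form \eqref{f_g} with any $k\geq 1$. Since a unit-norm tight frame is a spherical $(1,1)$-design (as noted in the introduction), we apply the theorem with $k=1$, for which we have already recorded the nodes and weights: $P_2^{(n)}(t)=(nt^2-1)/(n-1)$ gives $\alpha_1=-1/\sqrt n$, $\alpha_2=1/\sqrt n$, $\rho_1=\rho_2=1/2$; and $P_1^{(n+2)}(t)=t$ gives $\beta_0=-1$, $\beta_1=0$, $\beta_2=1$, with $\delta_0=\delta_2=1/(2n)$ and $\delta_1=(n-1)/n$.

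First I would treat $p\geq 2$. The upper bound in \eqref{W21} is obtained by applying Theorem~\ref{PULB-extract}(ii) to the potential $-h(t)=-|t|^p$, i.e. with $g$ replaced by $-u^{p/2}$: since $g''(u)=\tfrac p2(\tfrac p2-1)u^{p/2-2}\geq 0$ on $(0,1)$ when $p\geq 2$, the function $-g$ satisfies $(-g)''\leq 0$, so \eqref{PolarizationULB-e2} applies and yields $\sum_{y\in C}(-|x\cdot y|^p)\geq N\sum_{i=0}^{2}\delta_i(-|\beta_i|^p)=-\tfrac Nn(|\beta_0|^p+|\beta_2|^p)/2\cdot 1-\cdots$; more precisely $=-N(\delta_0\cdot 1+\delta_1\cdot 0+\delta_2\cdot 1)=-N/n$, giving $\sum_{y\in C}|x\cdot y|^p\leq N/n$. (Equivalently one may invoke Theorem~\ref{PUUB-extract} directly for $h(t)=|t|^p$, since $g(1)=1$ is finite.) The lower bound in \eqref{W21} follows from Theorem~\ref{PULB-extract}(i) applied to $h(t)=|t|^p$ itself, since $g''\geq 0$: \eqref{PolarizationULB-e1} gives $\sum_{y\in C}|x\cdot y|^p\geq N\sum_{i=1}^{2}\rho_i|\alpha_i|^p=N\cdot\tfrac12\bigl(n^{-p/2}+n^{-p/2}\bigr)=N\,n^{-p/2}$.

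For $0<p<2$ the sign of $g''$ reverses: $g''(u)=\tfrac p2(\tfrac p2-1)u^{p/2-2}\leq 0$ on $(0,1)$. Hence Theorem~\ref{PULB-extract}(ii) applies to $h(t)=|t|^p$ directly, giving the lower bound $\sum_{y\in C}|x\cdot y|^p\geq N\sum_{i=0}^{2}\delta_i|\beta_i|^p=N/n$, which is the right inequality of \eqref{W21ab} read as a lower bound. For the upper bound in \eqref{W21ab}, apply Theorem~\ref{PULB-extract}(i) to $-h(t)=-|t|^p$: here $(-g)''\geq 0$, so \eqref{PolarizationULB-e1} gives $\sum_{y\in C}(-|x\cdot y|^p)\geq N\sum_{i=1}^2\rho_i(-|\alpha_i|^p)=-N\,n^{-p/2}$, i.e. $\sum_{y\in C}|x\cdot y|^p\leq N\,n^{-p/2}$.

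The only mild subtlety — and the step I would be most careful about — is the behavior of $g(u)=u^{p/2}$ at the endpoint $u=0$ when $0<p<2$: then $g'(0)$ need not exist (for $p<2$, $g'(u)=\tfrac p2 u^{p/2-1}\to\infty$ as $u\to 0^+$), but this is exactly the situation anticipated in the paragraphs preceding Theorem~\ref{PULB-extract}, where it is noted that the Hermite polynomial interpolates $h$ at $t=0$ only to degree one (and for $k=1$, $\mu=0$ anyway since $k=2\ell+\mu$ forces $\mu=1$ when $k=1$; one checks that the relevant interpolation node at $u=0$ is a plain Lagrange node, so no derivative of $g$ at $0$ is required). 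Also $g(1)=1<\infty$ throughout, so no extended-sense issue at $u=1$ arises, and the hypotheses of both parts of Theorem~\ref{PULB-extract} are met. Collecting the four inequalities gives \eqref{W21} and \eqref{W21ab}.
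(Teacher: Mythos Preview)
Your proposal is correct and follows essentially the same route as the paper: apply Theorem~\ref{PULB-extract} with $k=1$ to $h$ or $-h$ depending on the sign of $g''$, using the explicit nodes $\alpha_i=\pm 1/\sqrt{n}$, $\rho_i=1/2$ for part~(i) and $\beta_i\in\{-1,0,1\}$, $\delta_i\in\{1/(2n),(n-1)/n,1/(2n)\}$ for part~(ii). One small slip: in your parenthetical about the endpoint $u=0$ you write ``$\mu=0$ anyway since $k=2\ell+\mu$ forces $\mu=1$''; the two parts of the theorem use different parametrizations ($k=2\ell-1+\mu$ in~(i), $k=2\ell+\mu$ in~(ii)), so for $k=1$ one has $\mu=0$ in~(i) and $\mu=1$ in~(ii), but in either case only a Lagrange condition is imposed at $0$ and your conclusion stands.
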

     
\begin{remark}
    Note that a Waring-type identity follows immediately when one sets $p=2$. Indeed, for $p=2$, inequalities \eqref{W21} yield the equality $\sum\limits_{y\in C}(x\cdot y)^2=\frac {|C|}{n}$, which is a special case of \eqref{Waring} with $\ell=2$ and $\|x\|=1$.
\end{remark}
\end{example}

As a consequence of Theorem  \ref{PULB-extract}(i) we prove a Fazekas-Levenshtein-type bound (see \cite{FL}) for the covering radius of spherical $(k,k)$-designs. 
It is worth to mention that, utilizing again the relation between the $(k,k)$-designs and 
antipodal spherical $(2k+1)$-designs, the
Fazekas-Levenshtein bound in the case of real projective space also follows.

\begin{corollary} \label{FL-type}
Let $k\geq 1$ be an integer and suppose there is a spherical $(k,k)$-design $C\subset \mathbb{S}^{n-1}$ with $|C|=N$. Then $r_{k,N}\geq \alpha_{k+1}$.
\end{corollary}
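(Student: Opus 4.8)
First I would argue by contradiction: suppose $C$ is a spherical $(k,k)$-design with $|C|=N$ and $r(C)<\alpha_{k+1}$. Unwinding the definition of $r(C)$, there is a point $x_0\in\mathbb{S}^{n-1}$ with $\max T(x_0,C\cup(-C))<\alpha_{k+1}$; since $C\cup(-C)$ is antipodal, this is equivalent to $|x_0\cdot y|<\alpha_{k+1}$, i.e.\ $x_0\cdot y\in(-\alpha_{k+1},\alpha_{k+1})$, for every $y\in C$. The plan is to derive a contradiction from this by applying the constancy property of Lemma~\ref{k-k-lemma}(v) to two carefully chosen even polynomials of degree at most $2k$.

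The first polynomial is
\[
f(t):=\frac{\bigl(P_{k+1}^{(n)}(t)\bigr)^2}{t^2-\alpha_{k+1}^2}.
\]
Since $t^2-\alpha_{k+1}^2$ divides $P_{k+1}^{(n)}(t)$ (using $\alpha_1=-\alpha_{k+1}$ from the symmetry of the roots), $f$ is in fact a polynomial; it has degree $2k$ and is even. Two properties drive the argument: first, $f(t)\le 0$ on $[-\alpha_{k+1},\alpha_{k+1}]$, since there the numerator is $\ge 0$ and the denominator is $\le 0$; second, $f$ vanishes at every zero $\alpha_i$ of $P_{k+1}^{(n)}$, so by the exactness of the Gauss--Gegenbauer quadrature \eqref{GJQ} on $\Pi_{2k+1}$ (Lemma~\ref{LemmaGJQ}) and $\deg f=2k$, one gets $f_0=\gamma_n\int_{-1}^1 f(t)(1-t^2)^{(n-3)/2}\,dt=\sum_{i=1}^{k+1}\rho_i f(\alpha_i)=0$. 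Now Lemma~\ref{k-k-lemma}(v) gives $\sum_{y\in C}f(x_0\cdot y)=Nf_0=0$, while each summand is $\le 0$ because $x_0\cdot y\in(-\alpha_{k+1},\alpha_{k+1})$; hence $f(x_0\cdot y)=0$ for every $y\in C$, so each $x_0\cdot y$ is a zero of $f$ lying in $(-\alpha_{k+1},\alpha_{k+1})$, and these are precisely the interior roots $\alpha_2,\dots,\alpha_k$ of $P_{k+1}^{(n)}$. If $k=1$ this set is empty, which forces $C=\emptyset$ and contradicts $N\ge1$, proving the corollary in that case.

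For $k\ge 2$ I would finish with a second application of Lemma~\ref{k-k-lemma}(v). Set $g(t):=\prod_{j=2}^{k}(t-\alpha_j)$, a polynomial of degree $k-1$; since the set $\{\alpha_2,\dots,\alpha_k\}$ is symmetric about the origin, $g^2$ is an even polynomial of degree $2k-2\le 2k$. By the previous step $g(x_0\cdot y)=0$ for all $y\in C$, so $\sum_{y\in C}g(x_0\cdot y)^2=0$; but Lemma~\ref{k-k-lemma}(v) also gives $\sum_{y\in C}g(x_0\cdot y)^2=N\int_{-1}^1 g(t)^2\,d\mu_n(t)>0$, because $g^2\ge 0$ is a nonzero polynomial and $\mu_n$ is a positive measure. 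This contradiction shows $r(C)\ge\alpha_{k+1}$ for every $(k,k)$-design $C$ with $|C|=N$, and passing to the infimum over such $C$ yields $r_{k,N}\ge\alpha_{k+1}$. The polynomial bookkeeping and the sign analysis of $f$ are routine; the essential ingredient is that for a $(k,k)$-design the potential of every even polynomial of degree $\le 2k$ is constant on the sphere, and the main obstacle I anticipate is that a single use of this fact does not close the argument when $k\ge2$ -- it only confines the inner products $x_0\cdot y$ to the finitely many interior nodes $\alpha_2,\dots,\alpha_k$ -- so the auxiliary polynomial $g$ is needed to rule out even that degenerate configuration. (Up to a constant factor, $f$ is exactly the Hermite-type extremal polynomial underlying Theorem~\ref{PULB-extract}(i), which is the sense in which this corollary is a consequence of that result.)
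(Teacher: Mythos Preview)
Your proof is correct and takes a genuinely different route from the paper's. The paper derives the corollary from Theorem~\ref{PULB-extract}(i): it applies the universal lower bound to the one-parameter family of even ``Riesz-type'' potentials $h_m(t)=(2-2t)^{-m/2}+(2+2t)^{-m/2}$, obtains an inequality relating $r(C)$ and $\alpha_{k+1}$ via the bound $\sum_{y\in C}h_m(x\cdot y)\ge N\sum_i\rho_i h_m(\alpha_i)$, and then takes $m\to\infty$ to force $r(C)\ge\alpha_{k+1}$. Your argument instead bypasses Theorem~\ref{PULB-extract}(i) entirely and works directly with the constancy property of Lemma~\ref{k-k-lemma}(v) and the Gauss--Gegenbauer quadrature: the key polynomial $f(t)=\bigl(P_{k+1}^{(n)}(t)\bigr)^2/(t^2-\alpha_{k+1}^2)$ has $f_0=0$ and is $\le 0$ on $[-\alpha_{k+1},\alpha_{k+1}]$, which pins every inner product $x_0\cdot y$ to the interior Gegenbauer nodes, and the auxiliary square $g^2$ finishes the job. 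This is essentially the classical Fazekas--Levenshtein argument specialized to the even-polynomial setting; it is more elementary and self-contained than the paper's proof, while the paper's route has the virtue of exhibiting the corollary as a direct byproduct of its main polarization bound. Your parenthetical remark that $f$ is ``the Hermite-type extremal polynomial underlying Theorem~\ref{PULB-extract}(i)'' is a bit loose---$f$ is rather the annihilating polynomial that makes the quadrature vanish, not an interpolant of any particular $h$---but this does not affect the argument.
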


\begin{proof} 
Assume to the contrary that $r_{k,N}< \alpha_{k+1}$. Then there exists a spherical $(k,k)$-design $C$ of cardinality $|C|=N$ such that $r(C)<\alpha_{k+1}$. Theorem \ref{PULB-extract} (i) 
implies that $$ \sum_{y\in C}h(x\cdot y)\geq m^h(C)\geq N \cdot \sum_{i=1}^{k+1} \rho_i h(\alpha_i)$$
for all even potentials $h$ as in \eqref{f_g} with $g^{(k+1)}\geq 0$ on $(0,1)$, in particular, for the Riesz-type potentials 
$$h(t)=h_m (t):=(2-2t)^{-m/2}+(2+2t)^{-m/2}, \ m>0.$$  
Indeed, the function $R(t):=(2-2t)^{-m/2}$ is strictly absolutely monotone on $(-1,1)$ and hence, its Taylor series at $t=0$ has positive coefficients. Then the Taylor series of $h_m(t)$ at $t=0$
has only even powers of $t$ with positive coefficients. That is, $h_m(t)=g(t^2)$ for some strictly absolutely monotone $g$ on $(0,1)$.
Therefore, $h_m (t)$  strictly decreases on $(-1,0]$ and strictly increases on $[0,1)$. There is $x \in \mathbb{S}^{n-1}$ such that $T(x,C\cup -C)\subset [-r(C),r(C)]$. For such a $x$, since $\rho_i>0$, $i=1,\dots,k+1$, see Lemma \ref{LemmaGJQ}, we have that
\begin{eqnarray*}
\frac{N}{(2-2r(C))^{m/2}}+\frac{N}{(2+2r(C))^{m/2}} &\geq& \sum_{y\in C}h_m (x\cdot y) \geq N \cdot \sum_{i=1}^{k+1} \rho_i h_m(\alpha_i) \\
&\geq& \frac{N\rho_{k+1}}{(2-2\alpha_{k+1})^{m/2}}+\frac{N\rho_{k+1}}{(2+2\alpha_{k+1})^{m/2}}.
\end{eqnarray*}
Taking an $m$-th root and letting $m\to \infty$ we derive a contradiction.
\end{proof}

\section {Universal upper bounds} \label{section-UUB}

\subsection{Upper bounds}

We shall utilize the Fazekas-Levenshtein-type bound from Corollary \ref{FL-type} to derive polarization universal upper bounds (PUUB) for $(k,k)$-designs. In analyzing the program \eqref{PUUB_LP_Program} 
we shall use polynomials $q_{k,s}$ obtained in subsection \ref{PDmeasures} via the Gram-Schmidt orthogonalization process with respect to 
the signed measure $\nu_s$ defined in \eqref{nu_s'}.

Given a $(k,k)$-design $C$, we have that $r(C) \geq r_{k,N}\geq \alpha_{k+1}$
by Corollary \ref{FL-type}. Therefore, fixing any $s>r_{k,N}$ the measure $\nu_s$ is positive definite up to degree $k-1$, see Lemma \ref{SignedNu_s}. Hence, applying Gram-Schmidt orthogonalization procedure we obtain orthogonal polynomials $\{ q_{j,s} (t)\}_{j=0}^{k}$ with respect to $\nu_s$. Note that because of the symmetry of the (signed) measure of orthogonality, the roots of $q_{j,s}$ will also be symmetric about the origin.

Recall that $\{ \lambda_i \}_{i=1}^{k}$ are the $k$ distinct zeros of the polynomial $q_{k,s}(t)$, 
all in the interval $(-s,s)$ as asserted by Lemma \ref{int_s} and that $\lambda_0=-s$ and $\lambda_{k+1}=s$.  
Hence, we have the ordering 
$$ -s=\lambda_0<\lambda_{1}<\cdots<\lambda_{k}<\lambda_{k+1}=s. $$
 We are now in a position to prove Theorem \ref{PUUB-extract-s} establishing a polarization universal upper bound.

\subsection{Proof of Theorem \ref{PUUB-extract-s}} We will use the same approach as in the proof of Theorem \ref{PULB-extract}.
 Applying Corollary \ref{FL-type} we get $\alpha_{k+1}\leq r_{k,N}\leq R_{k,N} < s$.
To find an appropriate polynomial in $\mathcal{U}(k,s;h)$, let $k:=2\ell +\mu$, $\ell\in \mathbb Z$ and $\mu\in \{0,1\}$, and let $G_k(u)$ be the polynomial in $\Pi_k$ that interpolates $g$ and $g'$ at the nodes $u_{\ell+1-i}:=\lambda_i^2$, $i=1,\dots,\ell$, interpolates $g$ at $u_{\ell+1}:=s^2$, and if $\mu=1$ also at $u=0$. The interpolation error formula (see \cite[Section 2.2]{IsaKel1994} or \cite[Theorem 3.5.1]{Dav1975} yields that for any $u\in (0,s^2)$, there exists $\xi=\xi(u)\in (0,s^2)$ such that
\begin{equation} \label{HermiteError3}
g(u)-G_k(u)=\frac{g^{(k+1)}(\xi)}{(k+1)!} u^\mu(u-u_1)^2\dots(u-u_{\ell})^2(u-u_{\ell+1}) \leq 0,
\end{equation}
which implies that $g(u)\leq G_k (u)$ on $[0,s^2]$. Then one obtains that $H_{2k}^s(t):=G_k (t^2) \in \mathcal{U}(k,s;h)$. The interpolation conditions for $G_k$ imply that $H_{2k}^s$ interpolates $h$ at each $\lambda_i$, $i=0,\ldots,k+1$, and $h'$ at each non-zero $\lambda_i$, $i=1,\ldots,k$.
Moreover, since $s>R_{k,N}$, for any spherical $(k,k)$-design $C$, we have that $r(C)< s$.
This means that for some $x^*\in \mathbb{S}^{n-1}$, we have $T( x^*, C \cup -C) \subset (-s, s)$, and hence using Lemma \ref{k-k-lemma} v), Lemma \ref {signed}, the interpolation of $h$ by $H_{2k}^s$, and the inequality $H_{2k}^s(t)\geq h(t)$, $t\in [-s,s]$ we have
\begin{equation}\label{42a}
\begin{split}
m^h(C)&\leq U^h(x^*,C)\leq U^{H_{2k}^s} (x^*,C) =(H_{2k}^s)_0 N \\ &=N \gamma_n\int\limits_{-1}^{1}H_{2k}^s(t)(1-t^2)^{(n-3)/2}\ \! dt=N\cdot \sum\limits_{i=0}^{k+1}\theta_i H_{2k}^s(\lambda_i)= N \cdot \sum_{i=0}^{k+1} \theta_i h(\lambda_i),
\end{split}
\end{equation}
where $(H_{2k}^s)_0$ is the constant coefficient in the Gegenbauer expansion of $H_{2k}^s$. Taking the supremum of $m^h(C)$ over all $(k,k)$-designs of cardinality $N$ yields \eqref{PolarizationUUB-e3}.

To see that the bound is optimal among polynomials in $\mathcal{U}(k,s;h)$, suppose that $f$ is an arbitrary polynomial in the class. Taking into account Lemma \ref{signed} and \eqref{42a}, we then have 
$$ Nf_0=N\gamma_{n} \int_{-1}^1 f(t) (1-t^2)^{(n-3)/2}\, dt = N \cdot \sum_{i=0}^{k+1} \theta_i f(\lambda_i)\geq N \cdot \sum_{i=0}^{k+1} \theta_i h(\lambda_i)=N (H_{2k}^s)_0.$$
Therefore, $f_0\geq (H_{2k}^s)_0$; that is, $H_{2k}^s(t)$ is an optimal polynomial from $\mathcal{U}(k,s;h)$ for linear program \eqref{PUUB_LP_Program}. Its uniqueness is proved analogously to Theorem \ref{PULB-extract} (i) by showing that any other minimizer must satisfy the same interpolation conditions as $H_{2k}^s$ and using the uniqueness of the interpolanting polynomial.
\hfill $\Box$

\begin{remark}\label{Rem_1}
Note that the second inequality in \eqref{42a} holds for any point $z\in \mathbb{S}^{n-1}$ for which $T (z,C\cup -C) \subset [-s,s]$. Taking the supremum over the set of such $z$ (which is the complement of a finite union of spherical caps) and letting $s\to 1^-$ for the case of $h(1)<\infty$, we obtain an alternative proof of Theorem \ref{PUUB-extract}. One needs to use the fact that the value $N\cdot \sum\limits_{i=1}^{N}\theta_ih(\lambda_i)$ is the optimum (minimum) value of linear program \eqref{PUUB_LP_Program} which is an increasing function of $s$. Then it has a limit as $s\to 1^{-}$. This limit equals $N\cdot \sum\limits_{i=1}^{N}\delta_ih(\beta_i)$.
\end{remark}

\subsection{Second proof of Theorem \ref{PUUB-extract}}
We modify the proof of Theorem \ref{PUUB-extract-s} as follows. From \eqref{HermiteError3} with $s=1$ utilizing the fact that $g(1)$ is finite we conclude that $h \leq H_{2k}^s$ on $[-1,1]$. Thus,  
$$  U^h(x,C) \leq U^{H_{2k}^s}(x,C)=N(H^s_{2k})_0=N \cdot \sum_{i=0}^{k+1} \theta_i h(\lambda_i)=N \cdot \sum_{i=0}^{k+1} \delta_i h(\beta_i),$$
where $H_{2k}^s$ is the interpolation polynomial (for $s=1$) to $h$ at the nodes $\{ \lambda_i \}_{i=0}^{k+1}=\{ \beta_i \}_{i=0}^{k+1}$ as described above. Taking the supremum over $x\in\mathbb S^{n-1}$ and the infimum first over all $N$-point $(k,k)$-designs $C$ and then over all $N$-point codes on $S^{n-1}$ we complete the proof.
\hfill $\Box$

\section {Polarization optimality of $(k,k)$-designs} 
\label{kk-optimality}

In this section, we consider the special case of potential $h(t)=t^{2k}$
solving the polarization problem when $(k,k)$-designs of the 
corresponding cardinality exist. To keep the presence of $h(t)=t^{2k}$, we shall denote the corresponding polarization quantities $m^{h}(C)$, $m^{h}_N$,  $M^{h}(C)$, and $M^{h}_N$ from \eqref{max-min} and \eqref{min-max} with $m^{2k}(C)$, $m^{2k}_N$,  $M^{2k}(C)$, and $M^{2k}_N$, respectively, in order to stress that we are in this special case.

Recall the definition \eqref{cell} of the constants $c_{\ell}$, $\ell \geq 0$, and note that $c_{2\ell}$ is equal to the zeroth coefficient (i.e., the 
coefficient $f_0$) in the Gegenbauer expansion of the polynomial $t^{2\ell}$.

\begin {problem}\label {P1}
For given $n \geq 2$ and $k,N\geq 1$,  find quantities  $m^{2k}_N$, $M^{2k}_N$, and the optimal configurations $C$ such that $m^{2k}(C)=m^{2k}_N$ or $M^{2k}(C)=M^{2k}_N$, respectively.
\end {problem}

Let $\Pi_{k,N}^n$, where $n \geq 2$ and $k,N\geq 1$, be the set of all $N$-point spherical codes $C\subset \mathbb{S}^{n-1}$ such that the potential
$$
U_{2k}(x,C):=\sum\limits_{y\in C}(x\cdot y)^{2k}
$$
is constant over $x\in \mathbb{S}^{n-1}$.
For the proof of the results of this section, we will need the following two auxiliary statements. 

\begin {lemma}\label {av}
Let $n \geq 2$ and $k,N\geq 1$. For any $N$-point spherical code 
$C\subset \mathbb{S}^{n-1}$, 
$$
\int_{\mathbb{S}^{n-1}}U_{2k}(x,C)\ \! d\sigma_{n}(x)=c_{2k}N.
$$
\end {lemma}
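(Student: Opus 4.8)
The plan is to compute the integral by interchanging the sum over code points with the integral over the sphere and then invoking the rotational invariance of $\sigma_n$. First I would write
$$
\int_{\mathbb S^{n-1}}U_{2k}(x,C)\,d\sigma_n(x)=\int_{\mathbb S^{n-1}}\sum_{y\in C}(x\cdot y)^{2k}\,d\sigma_n(x)=\sum_{y\in C}\int_{\mathbb S^{n-1}}(x\cdot y)^{2k}\,d\sigma_n(x),
$$
which is justified since the sum is finite. For each fixed $y\in C\subset\mathbb S^{n-1}$, the inner integral is exactly $c_{2k}$ by the definition \eqref{cell} of $c_\ell$ (with $\ell=2k$ even), because $c_\ell=\int_{\mathbb S^{n-1}}(z\cdot y)^\ell\,d\sigma_n(y)$ does not depend on the choice of unit vector $z$ (equivalently, one may use the Funk–Hecke formula \eqref{FunkHecke} with $f(t)=t^{2k}$ to see the integral equals $\gamma_n\int_{-1}^1 t^{2k}(1-t^2)^{(n-3)/2}dt=c_{2k}$). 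Summing over the $N$ points of $C$ then gives $c_{2k}N$.

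The only point requiring a word of care is that the formula \eqref{cell} is stated for $\int_{\mathbb S^{n-1}}(z\cdot y)^\ell\,d\sigma_n(y)$, i.e.\ with $z$ fixed and the variable of integration being the second argument, whereas here $y$ is fixed and we integrate over the first argument $x$; these agree by the symmetry of the inner product and the rotational invariance of $\sigma_n$, so this is immediate.

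There is no real obstacle here; the statement is a direct consequence of linearity of the integral and the definition of $c_{2k}$. The lemma is presumably recorded because it pins down the common value of $U_{2k}(x,C)$ for codes $C\in\Pi_{k,N}^n$ (namely $c_{2k}N$, the constant value being forced to equal its own average), which will then be used together with the subsequent auxiliary statement to prove the optimality of $(k,k)$-designs in Theorem \ref{minmax-maxmin-extract}.
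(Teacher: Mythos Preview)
Your proof is correct and follows essentially the same approach as the paper: exchange the finite sum with the integral, evaluate each inner integral via the Funk--Hecke formula (equivalently, the definition of $c_{2k}$), and sum over the $N$ points. Your remark on the symmetry of the inner product is a harmless extra observation; otherwise the arguments coincide.
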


\begin {proof}
By the Funk-Hecke formula (see \eqref{FunkHecke}), we have
\begin{eqnarray*}
\int_{\mathbb{S}^{n-1}}U_{2k}(x,C)\ \! d\sigma_{n}(x)&=& 
\int_{\mathbb{S}^{n-1}}
\sum\limits_{y\in C}(x\cdot y)^{2k}\ \! d\sigma_{n}(x)\\
&=& \sum\limits_{y\in C}\int_{\mathbb{S}^{n-1}}(x\cdot y)^{2k}\ \! d\sigma_{n}(x)=\sum\limits_{y\in C}\int_{-1}^{1}t^{2k} \, d\mu_{n}(t)=c_{2k}N,
\end{eqnarray*}
which completes the proof.
\end {proof}

\begin {proposition}{\rm(}\cite {Ven2001,KP10}{\rm)}\label {Pi=kk}
Let $n\geq 2$ and $k,N\geq 1$ be such that $\Pi_{k,N}^n\neq \emptyset$. An $N$-point code $C\subset \mathbb{S}^{n-1}$ is a $(k,k)$-design if and only if $C\in \Pi_{k,N}^n$.
\end {proposition}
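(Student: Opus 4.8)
The plan is to establish the two implications separately: the forward direction is immediate from Lemma~\ref{k-k-lemma}, and the reverse direction rests on the averaging identity of Lemma~\ref{av}, the only subtle point being the correct identification of the value of the constant potential.

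First I would treat the ``only if'' part. If $C$ is an $N$-point spherical $(k,k)$-design, then since $f(t):=t^{2k}$ is an even polynomial of degree $2k$ with Gegenbauer expansion $t^{2k}=\sum_{i=0}^{k}f_{2i}P_{2i}^{(n)}(t)$ whose constant term is $f_0=\int_{-1}^{1}t^{2k}\,d\mu_n(t)=c_{2k}$, Lemma~\ref{k-k-lemma} v) applied to $f$ gives $U_{2k}(x,C)=U^f(x,C)=f_0N=c_{2k}N$ for every $x\in\mathbb{S}^{n-1}$. In particular $U_{2k}(\,\cdot\,,C)$ is constant on $\mathbb{S}^{n-1}$, so $C\in\Pi_{k,N}^{n}$.

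For the ``if'' part, I would start from $C\in\Pi_{k,N}^{n}$, i.e.\ $U_{2k}(x,C)\equiv\kappa$ for some constant $\kappa$. The key step is to pin down $\kappa$: integrating the (constant) function $U_{2k}(\,\cdot\,,C)$ over $\mathbb{S}^{n-1}$ and using Lemma~\ref{av} gives $\kappa=\int_{\mathbb{S}^{n-1}}U_{2k}(x,C)\,d\sigma_n(x)=c_{2k}N$. Hence $\sum_{y\in C}(x\cdot y)^{2k}=c_{2k}N$ for all $x\in\mathbb{S}^{n-1}$, which is precisely identity~\eqref{Waring-on-sphere} with $\ell=2k$ --- equivalently, since $\|x\|=1$ on the sphere, \eqref{Waring} holds for $\ell=2k$ and all $x\in\mathbb{S}^{n-1}$. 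By the equivalence i)$\Leftrightarrow$iii) in Lemma~\ref{k-k-lemma}, $C$ is a spherical $(k,k)$-design, which finishes the argument.

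Since both implications are short, I do not expect a genuine obstacle; the one point requiring care --- and the reason Lemma~\ref{av} is needed --- is that the mere constancy of $U_{2k}(\,\cdot\,,C)$ is not on its face the $(k,k)$-design condition, so one must first evaluate that the constant equals $c_{2k}N$ before Lemma~\ref{k-k-lemma} iii) can be invoked. I would also note that the hypothesis $\Pi_{k,N}^{n}\neq\emptyset$ is not actually used in the argument: by the forward direction it is equivalent to the existence of an $N$-point $(k,k)$-design on $\mathbb{S}^{n-1}$, and it is stated only to keep the proposition from being vacuous.
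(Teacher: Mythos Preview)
Your proof is correct and follows essentially the same approach as the paper's: both directions use Lemma~\ref{k-k-lemma} (the paper cites part~iii) where you cite~v), but this is cosmetic), and the reverse direction identifies the constant via Lemma~\ref{av} before invoking the characterization. Your closing observation that the hypothesis $\Pi_{k,N}^n\neq\emptyset$ is not actually used is also accurate.
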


\begin{proof}
If an $N$-point code $C\subset \mathbb{S}^{n-1}$ is a $(k,k)$-design, by Lemma \ref{k-k-lemma} iii) we have $U_{2k}(x,C)=c_{2k}
N$, $x\in \mathbb S^{n-1}$. Then, by definition, $C\in \Pi_{k,N}^n$. Conversely, if $C\in \Pi_{k,N}^n$, we have $U_{2k}(x,C)=c$, $x\in \mathbb S^{n-1}$, for some constant $c$. Since $\sigma_n$ is a probability measure, by Lemma \ref{av}, we have $c=c_{2k}N$; that is, $\sum\limits_{y\in C}(x\cdot y)^{2k}=c_{2k}N$ for $x\in \RR^n$, $\|x\|=1$. Then by Lemma \ref{k-k-lemma} iii) the code $C$ is a $(k,k)$-design.
\end{proof}

\begin {remark}\label {card}
The set $\Pi_{k,N}^n$ may be empty for some triplets $(n,k,N)$. The result by Seymour and Zaslavsky \cite {SZ} implies that for any $n$ and any $k$, there exists a spherical design of strength $2k$ on $\mathbb{S}^{n-1}$ 
(thus, a spherical $(k,k)$-design) for all but at most finitely many cardinalities $N$. Such designs belong to $\Pi^n_{k,N}$. Therefore, for every pair $(n,k)$, we have $\Pi^n_{k,N}\neq \emptyset$ for all but at most finitely many $N$.
\end {remark}

The following result is a generalization of \cite [Theorem 2]{AmbNie2019}.

\begin{theorem}\label{maxmin}
Let $n \geq 2$ and $k,N\geq 1$. Then for every $N$-point code $C\subset \mathbb{S}^{n-1}$,
\begin {equation}\label {m2}
M^{2k}(C)\geq c_{2k}N.
\end {equation}
Equality in \eqref {m2} holds if and only if $C\in \Pi_{k,N}^n$; that is, if and only if $C$ is an $N$-point $(k,k)$-design on $\mathbb{S}^{n-1}$. If $\Pi_{k,N}^n\neq \emptyset$, we have
\begin {equation}\label {n2}
M^{2k}_{N}=c_{2k}N.
\end {equation}
\end{theorem}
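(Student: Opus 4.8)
The plan is to prove Theorem~\ref{maxmin} by combining the averaging identity from Lemma~\ref{av} with the characterization of $(k,k)$-designs as the codes with constant potential $U_{2k}(\cdot,C)$ (Proposition~\ref{Pi=kk}), together with a strict positivity argument for the remaining moments. First I would observe that the inequality \eqref{m2} is essentially a mean-value statement: since $\sigma_n$ is a probability measure on $\mathbb{S}^{n-1}$, Lemma~\ref{av} gives $\int_{\mathbb{S}^{n-1}} U_{2k}(x,C)\,d\sigma_n(x)=c_{2k}N$, and the supremum of a continuous function over $\mathbb{S}^{n-1}$ is at least its average, so $M^{2k}(C)=\sup_{x\in\mathbb{S}^{n-1}}U_{2k}(x,C)\geq c_{2k}N$. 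This disposes of \eqref{m2} with no extra work.

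For the equality characterization, the direction that is a $(k,k)$-design implies equality is immediate from Lemma~\ref{k-k-lemma}~iii) (or Proposition~\ref{Pi=kk}), which gives $U_{2k}(x,C)=c_{2k}N$ identically, so both $m^{2k}(C)$ and $M^{2k}(C)$ equal $c_{2k}N$. The converse is the main point: suppose $M^{2k}(C)=c_{2k}N$. Then $U_{2k}(x,C)\le c_{2k}N$ for all $x$, while its $\sigma_n$-average equals $c_{2k}N$ by Lemma~\ref{av}. Since $U_{2k}(\cdot,C)$ is continuous and $\sigma_n$ has full support, a function that is everywhere $\le$ its mean and continuous must equal its mean identically; hence $U_{2k}(x,C)\equiv c_{2k}N$, i.e. $C\in\Pi^n_{k,N}$, and by Proposition~\ref{Pi=kk} this forces $\Pi^n_{k,N}\neq\emptyset$ and $C$ to be a $(k,k)$-design. (Alternatively, one can expand $U_{2k}(x,C)-c_{2k}N$ in spherical harmonics: by the Funk--Hecke formula the component of degree $2k$ carries the moment $\mathcal{M}_{2k}(C)\ge 0$, lower-degree even components carry $\mathcal{M}_{2i}(C)\ge 0$, and all are nonnegative by positive-definiteness; the condition that the supremum equals the mean forces every nonconstant harmonic component to vanish, which is exactly $\mathcal{M}_{2i}(C)=0$ for $i=1,\dots,k$, i.e. the $(k,k)$-design condition via Lemma~\ref{k-k-lemma}~iv).) Finally, \eqref{n2} follows at once: when $\Pi^n_{k,N}\neq\emptyset$, any $(k,k)$-design $C$ has $M^{2k}(C)=c_{2k}N$, so $M^{2k}_N\le c_{2k}N$; the reverse inequality $M^{2k}_N\ge c_{2k}N$ is \eqref{m2} applied to every $N$-point code.

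The step I expect to require the most care is the converse implication in the equality case --- making precise that ``continuous, everywhere at most its mean, with full-support measure'' forces the function to be constant. This is elementary (if $U_{2k}(x_0,C)<c_{2k}N$ at some point, continuity gives a neighborhood with positive $\sigma_n$-measure on which $U_{2k}$ is strictly below $c_{2k}N-\varepsilon$, contradicting that the average is $c_{2k}N$ and $U_{2k}\le c_{2k}N$ elsewhere), but it must be stated cleanly. The harmonic-expansion alternative, while more conceptual, requires invoking orthogonality of spherical harmonics and the positive-definiteness of the Gegenbauer polynomials; either route is short. No genuine obstacle arises beyond bookkeeping, since all the heavy machinery (Lemma~\ref{av}, Proposition~\ref{Pi=kk}, Lemma~\ref{k-k-lemma}) is already in place.
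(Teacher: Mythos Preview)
Your proposal is correct and follows essentially the same approach as the paper: use Lemma~\ref{av} to identify $c_{2k}N$ as the $\sigma_n$-average of $U_{2k}(\cdot,C)$, deduce \eqref{m2} from ``max $\geq$ average,'' and for the equality case argue by continuity and full support of $\sigma_n$ that $U_{2k}(\cdot,C)$ must be constant, then invoke Proposition~\ref{Pi=kk}. The paper phrases the constancy step as ``if the minimum were strictly less than the maximum, the average would be strictly less than the maximum,'' which is the same argument you give; your harmonic-expansion alternative is not used in the paper but is a valid aside.
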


The case $k=1$ of the following theorem is dual to 
\cite[Theorem 2]{AmbNie2019}.

\begin{theorem}\label{minmax}
Let $n \geq 2$ and $k,N\geq 1$. Then for every $N$-point code $C\subset \mathbb{S}^{n-1}$,
\begin{equation}\label{m1}
m^{2k}(C)\leq c_{2k}N .
\end{equation}
Equality in \eqref{m1} holds if and only if $C\in \Pi_{k,N}^n$; that is, if and only if $C$ is a $N$-point $(k,k)$-design on $\mathbb{S}^{n-1}$. If $\Pi_{k,N}^{n}\neq \emptyset$, we have
\begin{equation}\label{n1}
m^{2k}_{N}=c_{2k}N.
\end{equation}
\end{theorem}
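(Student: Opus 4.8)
The plan is to run the same argument used for Theorem~\ref{maxmin}, with the roles of supremum and infimum interchanged. The single nontrivial input is Lemma~\ref{av}, which asserts that the $\sigma_n$-average of $U_{2k}(\cdot,C)$ over $\mathbb{S}^{n-1}$ equals $c_{2k}N$ for every $N$-point code $C$. Since $\sigma_n$ is a probability measure and $U_{2k}(x,C)\geq \inf_{z}U_{2k}(z,C)=m^{2k}(C)$ for all $x$, integrating gives
\[
m^{2k}(C)=\int_{\mathbb{S}^{n-1}}m^{2k}(C)\,d\sigma_n(x)\leq \int_{\mathbb{S}^{n-1}}U_{2k}(x,C)\,d\sigma_n(x)=c_{2k}N,
\]
which is \eqref{m1}.

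For the equality characterization, suppose first that $m^{2k}(C)=c_{2k}N$. The map $x\mapsto U_{2k}(x,C)=\sum_{y\in C}(x\cdot y)^{2k}$ is (the restriction to $\mathbb{S}^{n-1}$ of) a polynomial, hence continuous, and $U_{2k}(x,C)-m^{2k}(C)\geq 0$ with $\int_{\mathbb{S}^{n-1}}\bigl(U_{2k}(x,C)-m^{2k}(C)\bigr)\,d\sigma_n(x)=0$. A nonnegative continuous function whose integral against a measure of full support vanishes must vanish identically; as $\sigma_n$ charges every nonempty open subset of $\mathbb{S}^{n-1}$, we get $U_{2k}(x,C)\equiv c_{2k}N$, i.e.\ $C\in\Pi_{k,N}^n$. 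Conversely, if $C\in\Pi_{k,N}^n$ then $U_{2k}(\cdot,C)$ is constant, and Lemma~\ref{av} pins that constant to $c_{2k}N$, so $m^{2k}(C)=c_{2k}N$. (Alternatively, if $C$ is a $(k,k)$-design, Lemma~\ref{k-k-lemma}(iii) directly gives the Waring identity $U_{2k}(x,C)=c_{2k}N$ for all $x$.) The identification of $\Pi_{k,N}^n$ with the class of $N$-point $(k,k)$-designs is precisely Proposition~\ref{Pi=kk}.

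Finally, the bound \eqref{m1} yields $m^{2k}_N=\sup\{m^{2k}(C):|C|=N\}\leq c_{2k}N$, and when $\Pi_{k,N}^n\neq\emptyset$—equivalently, an $N$-point $(k,k)$-design exists—any such design attains $m^{2k}(C)=c_{2k}N$, so the supremum is exactly $c_{2k}N$, establishing \eqref{n1}. I do not anticipate a genuine obstacle: the only step needing care is the rigidity deduction that $U_{2k}(\cdot,C)$ is constant once $m^{2k}(C)=c_{2k}N$, and this is handled by the continuity of the polynomial $U_{2k}(\cdot,C)$ together with the full support of $\sigma_n$ on $\mathbb{S}^{n-1}$.
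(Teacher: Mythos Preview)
Your proof is correct and follows essentially the same approach as the paper: both use Lemma~\ref{av} to bound the minimum by the average $c_{2k}N$, deduce that equality forces $U_{2k}(\cdot,C)$ to be constant (you phrase this via continuity and full support of $\sigma_n$, the paper via a max/min comparison, but these are the same idea), and then invoke Proposition~\ref{Pi=kk} for the $(k,k)$-design characterization.
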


Remark \ref {card} implies that for each pair $n \geq 2$, $k\geq 1$, equalities \eqref{n2} and \eqref{n1} hold for all but at most finitely many $N$.

\begin {proof}[Proof of Theorem \ref {maxmin}]
For an arbitrary $N$-point code $C$ on $\mathbb{S}^{n-1}$, taking into account Lemma \ref {av}, we have
$$
M^{2k}(C)=\max\limits_{x\in \mathbb{S}^{n-1}} U_{2k}(x,C)\geq 
\int_{\mathbb{S}^{n-1}}U_{2k}(x,C)\ \! d\sigma_{n}(x)=c_{2k}N;
$$
that is, \eqref {m2} holds. If equality holds in \eqref {m2}, then the maximum value of $U_{2k}(x,C)$ over $\mathbb{S}^{n-1}$ equals its average value over $\mathbb{S}^{n-1}$. If the minimum of $U_{2k}(x,C)$ over $\mathbb{S}^{n-1}$ was strictly less than the maximum, then $U_{2k}(x,C)$ would be strictly less than its maximum on a subset of $\mathbb{S}^{n-1}$ of a positive 
$\sigma_{n}$-measure. Then the average value of $U_{2k}(x,C)$ would be strictly less than its maximum. This contradiction shows that the maximum and minimum value of $U_{2k}(x,C)$ coincide; that is, $U_{2k}(x,C)$ is constant over $\mathbb{S}^{n-1}$. Then $C\in \Pi_{k,N}^n$. 

Conversely, if $C\in \Pi_{k,N}^n$, then $U_{2k}(x,C)$ equals a constant on $\mathbb{S}^{n-1}$. Lemma \ref {av} implies that this constant is $c_{2k}N$. Then $M^{2k}(C)=c_{2k}N$; i.e., equality holds in \eqref {m2}. If $\Pi_{k,N}^n\neq \emptyset$, then equality \eqref {n2} is true.
By Proposition \ref {Pi=kk}, the code $C$ is an $N$-point $(k,k)$-design on $\mathbb{S}^{n-1}$  if and only if $C$ belongs to $\Pi_{k,N}^n$, which occurs if and only if $C$ attains equality in \eqref {m2}.
\end {proof}

The proof of Theorem \ref {minmax} is conducted in a similar way.

\begin {proof}[Proof of Theorem \ref {minmax}]
For an arbitrary $N$-point code $C$ on $\mathbb{S}^{n-1}$, taking into account Lemma \ref {av}, we have
$$
m^{2k}(C)=\min\limits_{x\in \mathbb{S}^{n-1}} U_{2k}(x,C)\leq 
\int_{\mathbb{S}^{n-1}}U_{2k}(x,C)\ \! d\sigma_{n}(x)=c_{2k}N;
$$
that is, \eqref {m1} holds. If equality holds in \eqref {m1}, then the minimum value of $U_{2k}(x,C)$ over $\mathbb{S}^{n-1}$ equals its average value over $\mathbb{S}^{n-1}$. If the maximum of $U_{2k}(x,C)$ over $\mathbb{S}^{n-1}$ was strictly greater than the minimum, then $U_{2k}(x,C)$ would be strictly greater than its minimum on a subset of $\mathbb{S}^{n-1}$ of a positive $\sigma_{n}$-measure. Then the average value of $U_{2k}(x,C)$ would be strictly greater than its minimum. This contradiction shows that the minimum and maximum value of $U_{2k}(x,C)$ coincide; that is, $U_{2k}(x,C)$ is constant over $\mathbb{S}^{n-1}$. Then $C\in \Pi_{k,N}^n$. 

Conversely, if $C\in \Pi_{k,N}^n$, then $U_{2k}(x,C)$ equals a constant on $\mathbb{S}^{n-1}$. Lemma \ref {av} implies that this constant is $c_{2k}N$. Then $m^{2k}(C)=c_{2k}N$; i.e., equality holds in \eqref {m1}. If $\Pi_{k,N}^n\neq \emptyset$, then equality \eqref {n1} holds.
By Proposition \ref {Pi=kk}, the code $C$ is a $N$-point $(k,k)$-design on $\mathbb{S}^{n-1}$ if and only if $C$ belongs to $\Pi_{k,N}^n$, which, in turn, occurs if and only if $C$ attains equality in \eqref {m1}.
\end {proof}

\begin{proof}[Proof of Theorem \ref{minmax-maxmin-extract}]
Theorem \ref{minmax-maxmin-extract} is obtained by combining Theorems \ref{maxmin} and \ref{minmax}.
\end{proof}

\noindent{\bf Acknowledgments.}  
The authors thank an anonymous reviewer for detailed remarks which improved the exposition.

{\bf Funding.}
The research of the second author was supported by Bulgarian NSF grant KP-06-N72/6-2023. The research of the third author was supported, in part, by the Lilly Endowment.
The research of the sixth author was supported, in part, by the European Union-NextGenerationEU, through the National Recovery and Resilience Plan of the Republic of Bulgaria, Grant no. BG-RRP-2.004-0008.

\end{document}